\newtheorem{theorem}{Theorem}[section]
\newtheorem{proposition}[theorem]{Proposition}
\theoremstyle{definition}
\theoremstyle{plain}
\newtheorem{lemma}[theorem]{Lemma}
\theoremstyle{remark}
\newtheorem{remark}[theorem]{Remark}
\title[Strong solvability of nonlocal systems]  
{The solvability of a strongly-coupled nonlocal system of equations 
}
\author
{Tadele Mengesha, James M. Scott}
\def\namedlabel#1#2{\begingroup
    #2%
    \def\@currentlabel{#2}%
    \phantomsection\label{#1}\endgroup
}
\newcommand{\e}{\mathrm{e}}
\newcommand{\intdm}[3]{ \int_{#1} #2 \, \mathrm{d}#3}
\newcommand{\iintdm}[5]{\int_{#1} \int_{#2} #3 \, \mathrm{d}#4 \, \mathrm{d}#5}
\newcommand{\intdmt}[4]{\displaystyle \int_{#1}^{#2} #3 \, \mathrm{d}#4}
\newcommand{\diffqbunorm}{\left| (\bu(\bx)-\bu(\by)) \cdot \frac{\bx-\by}{|\bx-\by|}\right|}
\newcommand{\shapetensorby}{\left( \frac{\by \otimes \by}{|\by|^2} \right)}
\newcommand{\shapetensorbh}{\left( \frac{\bh \otimes \bh}{|\bh|^2} \right)}
\newcommand{\shapetensorbfxi}{\left( \frac{\bfxi \otimes \bfxi}{|\bfxi|^2} \right)}
\newcommand{\DotLap}{(-\mathring{\Delta})}
\newcommand{\bfxi}{\boldsymbol{\xi}}
\newcommand{\bfeta}{\boldsymbol{\eta}}
\begin{document}

\maketitle


\numberwithin{equation}{section}


\begin{abstract}
We prove existence and uniqueness of strong (pointwise) solutions to a linear nonlocal strongly coupled hyperbolic system of equations posed on all of Euclidean space. The system of equations comes from a linearization of a nonlocal model of elasticity in solid mechanics. It is a nonlocal analogue of the Navier-Lam\'e system of classical elasticity. We use a well-known semigroup technique that hinges on the strong solvability of the corresponding steady-state elliptic system. The leading operator is an integro-differential operator characterized by a distinctive matrix kernel which is used to couple differences of components of a vector field. For an operator possessing an asymmetric kernel comparable to that of the fractional Laplacian, we prove the $L^2$-solvability of the elliptic system in a Bessel potential space using the Fourier transform and \textit{a priori} estimates. This $L^2$-solvability together with the Hille-Yosida theorem is used to prove the well posedness of the wave-type time dependent problem. For the fractional Laplacian kernel we extend the solvability to $L^p$ spaces using classical multiplier theorems. 
\end{abstract}

\section{Introduction}

In this note we report a solvability result for the strongly-coupled system of linear equations
\begin{equation}\label{eq:GeneralKernel:WaveEquation}
\begin{cases}
\p_{tt} \bu(\bx,t) + \bbL \bu(\bx,t) = \bff(\bx,t)\,, &\qquad  (\bx,t) \text{ in } \bbR^d \times [0,T]\,, \\
\bu(\bx,0) = \bu_0(\bx)\,, &\qquad \bx \text{ in } \bbR^d\,, \\
\p_t \bu(\bx,0) = \bv_0(\bx)\,, &\qquad \bx \text{ in } \bbR^d\,,
\end{cases}
\end{equation}
where the vector valued operator $\bbL$ is given by 
\begin{equation}\label{defn-operator}
-\bbL \bu(\bx,t) = \pv \intdm{\bbR^d}{ \shapetensorby \left( \bu(\bx+\by,t)-\bu(\bx,t) - \veps (\bu)(\bx,t) \by \, \chi^{(s)}(\by) \right) \, \rho(\by) }{\by}\,.
\end{equation}
The definition of the operator will be explained later along with precise conditions on $\rho$. 
The system of equations \eqref{eq:GeneralKernel:WaveEquation} is inspired by the equation of motion in linearized bond-based peridynamics \cite{silling2000}, a continuum model in mechanics that uses integral operators in lieu of differential operators to describe physical quantities. 
In the peridynamic model, an elastic material occupying a bounded domain is treated as a complex mass-spring system where any two points $\bx$ and $\by$ are assumed to be interacting through the bond vector $\bx - \by$. When the material is subjected to an external load $\bff$ it undergoes a deformation that maps a point $\bx$ in the domain to the point $\bx + \bu(\bx) \in \bbR^d$, where the vector field $\bu$ represents the displacement field. Under the uniform small strain theory \cite{silling2010} the strain of the bond $\bx - \by$ is given by the nonlocal linearized strain
$
\big( \bu(\bx) - \bu(\by) \big) \cdot \frac{\bx-\by}{|\bx-\by|}.
$
According to the linearized bond-based peridynamic model \cite{silling2010}, the balance of forces is given formally by a strongly coupled system of equations of the form
\begin{equation}\label{eq-PeridynamicSystem-WaveEquation}
\p_{tt} \bu(\bx,t) + \intdm{\bbR^d}{\rho(\bx,\by) \left(\big( \bu(\bx,t) - \bu(\by,t) \big) \cdot \frac{\bx-\by}{|\bx-\by|} \right) \frac{\bx-\by}{|\bx-\by|} }{\by} = \bff(\bx,t)\,.
\end{equation}
The kernel $\rho : \bbR^d \times \bbR^d \to \bbR$ 
encodes the strength and extent of interactions between the material points $\bx$ and $\by$. The kernel may depend on $\bx$, $\by$, their relative position $\bx-\by$ or, in the case of homogeneous isotropic materials, only on the relative distance $|\bx-\by|$. Thus, more general kernels $\rho$ have the potential to model heterogeneous and anisotropic long-range interactions. 

To analyze \eqref{eq-PeridynamicSystem-WaveEquation} we turn to the equilibrium equations for the corresponding steady-state system, given by
\begin{equation}\label{eq-PeridynamicSystem}
\intdm{\bbR^d}{\rho(\bx,\by) \left(\big( \bu(\bx) - \bu(\by) \big) \cdot \frac{\bx-\by}{|\bx-\by|} \right) \frac{\bx-\by}{|\bx-\by|} }{\by} = \bff(\bx)\,.
\end{equation}
The majority of the paper is devoted to obtaining results for \eqref{eq-PeridynamicSystem}, which are then leveraged via semigroups to obtain results for \eqref{eq-PeridynamicSystem-WaveEquation}. This semigroup approach will be explained in detail in Section \ref{sec:TimeDepProb}, and at present we focus on the system \eqref{eq-PeridynamicSystem}.
Mathematical analysis of \eqref{eq-PeridynamicSystem} often considers the case when the kernel is compactly supported, radially symmetric, $\rho(\bx, \by) = \rho(|\bx-\by|)$,  and bounded away from zero in a neighborhood of the origin \cite{Du-Zhou2011, Du-Zhou2010, mengesha2012nonlocalKorn, MengeshaDuElasticity}. The time dependent  linearized equation of motion  \eqref{eq-PeridynamicSystem-WaveEquation}  is also studied in \cite{Emmrich-Weckner2007, Etienne2} as an evolution equation in various spaces when the kernel is radial.   In the above cases, the integral operator is well-defined either as a convolution-type operator, when the kernel is integrable, or in the {\em principal value} sense in when the kernel has strong singularity.

In this work we study models associated with kernels that are translation-invariant but may be rotationally variant. 
To be precise, we assume $\rho(\bx,\by) = \rho(\bx-\by)$, where $\rho$ is not necessarily symmetric. 
This assumption combined with a formal change of variables leads to the nonlocal system
$$
\intdm{\bbR^d}{\rho(\by) \shapetensorby \big( \bu(\bx + \by) - \bu(\bx) \big)}{\by} = -\bff(\bx)\,.
$$ 
The integral operator on the left hand side converges when $\rho\in L^{1}(\mathbb{R}^{d})$. However, when $\rho$ is not integrable, it is not clear that  the integral converges, even when $\bu$ is smooth. We therefore modify the integral operator to ensure that it is well-defined. 
To illustrate the modification, first consider the specific choice of kernel $\rho(\by) := |\by|^{-d-2s}$ for $s \in (0,1)$, and define the integral operator
\begin{equation}\label{DotLap}
-\DotLap^s \bu(\bx) := \pv \intdm{\bbR^d}{\frac{1}{|\by|^{d+2s}} \shapetensorby \big( \bu(\bx+\by)-\bu(\bx) \big)}{\by}\,.
\end{equation}
As indicated, the integral converges in the principal value sense for $\bu$ smooth enough, say $C^2$. It is in fact a fractional analogue of the L\'ame operator in linearized elasticity \cite{MengeshaDuElasticity}.  Since the kernel is radial, we can subtract any odd function in the integrand and get, for example,
$$
-\DotLap^s \bu(\bx) = \pv \intdm{\bbR^d}{\frac{1}{|\by|^{d+2s}} \shapetensorby \Big( \bu(\bx+\by)-\bu(\bx) - \grad \bu(\bx) \by \chi^{(s)}(\by) \Big)}{\by}\,,
$$
where the function $\chi^{(s)}$ is defined as
$$
\chi^{(s)}(\by) = 
\begin{cases}
0 & \text{ if } s \in (0,1/2) \\
\mathds{1}_{B_1}(\by) & \text{ if } s =1/2 \\
1 & \text{ if } s \in (1/2,1)\,. \\
\end{cases}
$$
Because of the presence of the rank-one matrix $\shapetensorby$, the last term can be rewritten, giving
$$
-\DotLap^s \bu(\bx) = \pv \intdm{\bbR^d}{\frac{1}{|\by|^{d+2s}} \shapetensorby \Big( \bu(\bx+\by)-\bu(\bx) - \veps (\bu)(\bx) \by \chi^{(s)}(\by) \Big)}{\by}\,,
$$
where  $\veps (\bu)(\bx)$ is the symmetric part of the gradient matrix given by
\[
\veps (\bu)(\bx) = {1\over 2}(\grad \bu(\bx) + \grad \bu(\bx)^{\intercal}). 
\]
With this motivation at hand, we may now replace the kernel $|\by|^{-d-2s}$ with any translation-invariant kernel $\rho(\by)$ and introduce the integral operator $\bbL$ by
\begin{equation*}
-\bbL \bu(\bx) = \pv \intdm{\bbR^d}{ \shapetensorby \left( \bu(\bx+\by)-\bu(\bx) - \veps (\bu)(\bx) \by \, \chi^{(s)}(\by) \right) \, \rho(\by) }{\by}
\end{equation*}
which is precisely the operator defined in \eqref{defn-operator}. 
With this new modification, it is clear now that even for radially nonsymmetric kernels comparable to $|\by|^{-d-2s}$ the integral converges absolutely for smooth $\bu$.  More generally, the kernel $\rho$ that we will consider in this paper will come from two classes.  

{\bf Class A: Integrable kernels:} $\rho$ is a nonnegative integrable function in $\mathbb{R}^{d}$.
That is,
$\rho(\by) \in L^1(\bbR^d)$. In this case, in the definition of $\bbL$ in \eqref{defn-operator}, we take $\chi^{(s)}(\by) \equiv 0$. Integrable kernels commonly used in applications have compact support.


 {\bf Class B: Nonintegrable kernels}: $\rho$ is a nonnegative singular kernel that is comparable to $|\by|^{-d-2s}$. More precisely, 
we  assume that the kernel $\rho$ is of the form 
$$
\rho(\by) := \frac{m(\by)}{|\by|^{d+2s}} \, \mathds{1}_{\Lambda_r}(\by)\,,
$$
where for some $0<r\leq \infty$ the set $\Lambda_r := \Lambda \cap B_r$ is a truncated double cone $\Lambda := \{ \bx \in \bbR^d \, \big| \, \frac{\bx}{|\bx|} \in \Gamma \cup - \Gamma \} $ corresponding to a given measurable subset $\Gamma \subseteq \bbS^{d-1}$ that has positive Hausdorff measure, and the positive measurable function $m : \bbR^d \to [0,\infty)$ satisfies  
\[
0 < \alpha_1 \leq m(\by) \leq \alpha_2 < \infty\,,
\]
for positive constants $\alpha_1$ and $\alpha_2$. 
We also assume that when $s = 1/2$, $\rho$ satisfies the cancellation condition
\begin{equation}\label{eq-KernelCancellationCondition}
\intdm{\p B_{\mu}}{y_i y_j y_k \rho(\by)}{\sigma(\by)} = 0\,, \quad \forall \, i,j,k \in \{1, 2, \ldots, d \}\,, \quad \forall \mu > 0\,.
\end{equation}
Note that \eqref{eq-KernelCancellationCondition} is always satisfied when $m({\by})$ is an even function. 

To show well posedness of the time dependent problem, we establish the solvability of the linear system 
\begin{equation}\label{eq-GeneralKernel-Equation}
\bbL \bu(\bx) + \lambda \bu(\bx) = \bff(\bx) \quad \text{ in } \bbR^d\,,
\end{equation}
which is the first main goal of this paper.
The constant $\lambda$ is nonnegative and the operator $\bbL$ is assumed to possess a kernel from one of the above classes of kernels. For operators that use kernels in the nonintegrable class, we prove the existence and uniqueness of a strong solution in the Bessel potential space $\bu \in H^{2s,2}$ corresponding to data $\bff \in L^2$ satisfying \eqref{eq-GeneralKernel-Equation} almost everywhere. 
Further, when $\mathbb{L} =\DotLap$,  for each $\bff \in L^p$ there exists a unique $\bu \in H^{2s,p}$ solving \eqref{eq-GeneralKernel-Equation}. 
These and other results will be precisely stated in Section 2.  The proof of the strong $L^2$ solvability result for the equation \eqref{eq-GeneralKernel-Equation} is proved in Section 3. Section 4 contains the proofs leading to the strong $L^p$ solvability for $1 < p < \infty$ of the equation \eqref{eq-GeneralKernel-Equation} when the choice of kernel $\rho(\by) = |\by|^{-d-2s}$ is used. In the last section we show well posedness of a wave equation closely resembling \eqref{eq:GeneralKernel:WaveEquation} as a consequence of the solvability obtained for the steady-state problem. We emphasize that our focus in this paper is on the linear problem. For the well posedness of the nonlinear peridynamic equations of motion we refer to \cite{emmrich2013well,EtiennePuhst2015,Valdinoci-peridynamic}.

\section{Statement of Main Results}
Before we state the main results, let us establish notation and define the relevant function spaces.  
Euclidean balls of radius $r$ centered at $\bx_0$ are denoted $B_r(\bx_0) := \{ \bx \in \bbR^d \, \big| \, |\bx-\bx_0| < r \}$. If the center $\bx_0 = {\bf 0}$, or if the center is clear from context, we omit it and write $B_r$. We denote the Fourier transform $\cF$ by
$$
\cF f(\bfxi) = \widehat{f}(\bfxi) = \intdm{\bbR^d}{e^{-2\pi \iota \bx \cdot \bfxi}f(\bx)}{\bx}\,.
$$
We write the $L^p(\Omega)-$norm of a function as the standard $\Vnorm{\cdot}_{L^p(\Omega)}$, with abbreviation $\Vnorm{\cdot}_{L^p}$ whenever the domain of integration is all of $\bbR^d$.
Let $\cS(\bbR^d)$ and $\cS'(\bbR^d)$ be the space of Schwartz functions and tempered distributions, respectively. Denote the space of $\bbR^d$-valued Schwartz vector fields by $\big[ \cS(\bbR^d) \big]^d$, and its dual by $\big[ \cS'(\bbR^d) \big]^d$.
For $p \in (1,\infty)$ and $s\in (0, 1)$, the Bessel potential space is given by 
$$
\big[ H^{2s,p}(\bbR^d) \big]^d := \left\lbrace \bu \in \big[ \cS'(\bbR^d) \big]^d \, \Big| \, \Big(  \big( 1 + 4 \pi^2 |\bfxi|^2 \big)^s \widehat{\bu} \Big)^{\vee}  \in \big[ L^p(\bbR^d) \big]^d \right\rbrace\,,
$$
with norm
$$
\Vnorm{\bu}_{H^{2s,p}} 
= \Vnorm{\Big( \big(1+4 \pi^s |\bfxi|^2 \big)^s \widehat{\bu} \Big)^{\vee}}_{L^p}\,.
$$
Define the homogeneous space
$$
\big[ \dot{H}^{2s,p}(\bbR^d) \big]^d = \left\lbrace \bu \in  \big[ \cS'(\bbR^d) \big]^d \, \Big| \, \Big( \big( 4 \pi |\bfxi|^2  \big)^s \widehat{\bu} \Big)^{\vee} := \Dss \bu \in \big[  L^p(\bbR^d) \big]^d \right\rbrace\,,
$$
and denote the semi-norm
$
[ \bu ]_{H^{2s,p}(\bbR^d)} = \Vnorm{\Dss \bu }_{L^p(\bbR^d)}.
$
Since $1+ \big( 4 \pi^2 |\bfxi|^2 \big)^s$  can be controlled by $ \big( 1+ 4 \pi^2 |\bfxi|^2 \big)^s$ and vice versa, we have
$$
\Vnorm{\bu}_{H^{2s,p}} \approx \Vnorm{\bu}_{L^p} + [\bu]_{H^{2s,p}}\,.
$$
With these notations and definition at hand, we can now state the first result of the paper. 
%

\begin{theorem}[$L^2$ Solvability for general nonintegrable kernels]\label{thm-GeneralKernel-L2Solvability} Suppose that $\rho$ is in {\bf Class B}. There exists a constant $\lambda_{0} = \lambda_0(d,s,\Lambda,r,\alpha_1) > 0$ such that the following holds: For $\lambda > \lambda_0$ and for any $\bff \in \big[ L^2(\bbR^d) \big]^d$ there exists a unique strong solution $\bu \in \big[ H^{2s,2} (\bbR^d) \big]^d$ to the equation \eqref{eq-GeneralKernel-Equation} satisfying the estimate
\begin{equation}
\Vnorm{\Dss \bu}_{L^2} + \sqrt{\lambda} \Vnorm{\Ds \bu}_{L^2} + \big( \lambda - \lambda_0 \big) \Vnorm{\bu}_{L^2} \leq C \Vnorm{\bff}_{L^2}\,,
\end{equation}
where $C = C(d,s,\Lambda,\alpha_1) > 0$.
\end{theorem}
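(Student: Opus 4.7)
The plan is to solve \eqref{eq-GeneralKernel-Equation} on the Fourier side. A direct calculation identifies the matrix-valued symbol $\mathbb{M}(\bfxi)$ determined by $\widehat{\bbL\bu}(\bfxi) = \mathbb{M}(\bfxi)\widehat{\bu}(\bfxi)$, namely
\[
-\mathbb{M}(\bfxi) = \pv \intdm{\bbR^d}{\shapetensorby \bigl[ e^{2\pi\iota\bfxi\cdot\by}-1-2\pi\iota(\bfxi\cdot\by)\chi^{(s)}(\by) \bigr] \rho(\by)}{\by}.
\]
The subtraction of $\veps(\bu)(\bx)\by\chi^{(s)}(\by)$ inside $\bbL$ translates on the Fourier side to the $2\pi\iota(\bfxi\cdot\by)\chi^{(s)}$ term, because the rank-one form of $\shapetensorby$ together with $\widehat{\veps(\bu)} = \pi\iota(\bfxi\otimes\widehat\bu+\widehat\bu\otimes\bfxi)$ gives $\widehat{\shapetensorby\,\veps(\bu)(\bx)\by} = 2\pi\iota(\bfxi\cdot\by)\shapetensorby\widehat\bu(\bfxi)$. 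The standard bound $|e^{\iota z}-1-\iota z\,\chi^{(s)}|\lesssim\min(|z|^2,|z|,1)$, combined with the cancellation condition \eqref{eq-KernelCancellationCondition} at $s=1/2$, makes the principal value absolutely convergent and yields $|\mathbb{M}(\bfxi)|\lesssim 1+|\bfxi|^{2s}$.

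The crux is a coercivity bound on the Hermitian part $\Re\mathbb{M}(\bfxi)$. Since $\Re[e^{2\pi\iota\bfxi\cdot\by}-1-2\pi\iota(\bfxi\cdot\by)\chi^{(s)}(\by)] = -2\sin^2(\pi\bfxi\cdot\by)$, for any $\bv\in\mathbb{C}^d$,
\[
\langle \Re\mathbb{M}(\bfxi)\bv,\bv\rangle = 2\intdm{\Lambda_r}{\Bigl(\tfrac{\by}{|\by|}\cdot \bv\Bigr)^{\!2}\sin^2(\pi\bfxi\cdot\by)\,\rho(\by)}{\by}.
\]
Using $\rho\geq\alpha_1|\by|^{-d-2s}\mathds{1}_{\Lambda_r}$ and polar coordinates $\by=t\theta$ with $\theta\in\Gamma\cup -\Gamma$, evaluating the radial $t$-integral produces, for $r=\infty$, a constant multiple of $|\bfxi|^{2s}\int_{\Gamma\cup -\Gamma}(\theta\cdot\bv)^2|\theta\cdot\hat{\bfxi}|^{2s}\,d\sigma(\theta)$. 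The integrand vanishes only on the two hyperplanes $\bv^\perp\cup\hat{\bfxi}^\perp$, which have $\sigma$-measure zero, while $\sigma(\Gamma)>0$; a continuity/compactness argument on $\mathbb{S}^{d-1}\times\mathbb{S}^{d-1}$ then yields a uniform positive infimum. For $r<\infty$ the same argument works when $r|\bfxi|\gtrsim 1$, and the low-frequency loss is absorbed into an additive constant. The conclusion is
\[
\langle \Re\mathbb{M}(\bfxi)\bv,\bv\rangle \geq \bigl(c|\bfxi|^{2s}-\lambda_0\bigr)|\bv|^2 \qquad \forall\,\bfxi\in\bbR^d,\ \bv\in\mathbb{C}^d,
\]
for some $c>0$ and $\lambda_0\geq 0$ depending only on $d,s,\Lambda,r,\alpha_1$.

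Given this coercivity, for $\lambda>\lambda_0$ one has $\Re\langle(\mathbb{M}(\bfxi)+\lambda\mathbb{I})\bv,\bv\rangle\geq (c|\bfxi|^{2s}+\lambda-\lambda_0)|\bv|^2$, hence $|(\mathbb{M}(\bfxi)+\lambda\mathbb{I})\bv|\geq (c|\bfxi|^{2s}+\lambda-\lambda_0)|\bv|$ by Cauchy-Schwarz; this gives injectivity and therefore invertibility of the $d\times d$ matrix $\mathbb{M}(\bfxi)+\lambda\mathbb{I}$, with pointwise operator-norm bound $1/(c|\bfxi|^{2s}+\lambda-\lambda_0)$. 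Defining $\widehat\bu(\bfxi):=(\mathbb{M}(\bfxi)+\lambda\mathbb{I})^{-1}\widehat\bff(\bfxi)$, each of $|\bfxi|^{2s}|\widehat\bu|$, $\sqrt\lambda\,|\bfxi|^s|\widehat\bu|$ (handled via the AM-GM inequality $2\sqrt\lambda\,|\bfxi|^s\leq c|\bfxi|^{2s}+\lambda/c$), and $(\lambda-\lambda_0)|\widehat\bu|$ is dominated pointwise by a constant multiple of $|\widehat\bff(\bfxi)|$. Plancherel then delivers the three claimed $L^2$ estimates and places $\bu$ in $H^{2s,2}$; inverting the Fourier transform yields $\bbL\bu+\lambda\bu=\bff$ almost everywhere, and uniqueness is immediate from the injectivity of the symbol.

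The principal obstacle is the coercivity step. Because $\Gamma\subseteq\mathbb{S}^{d-1}$ is only assumed to have positive surface measure and may be highly irregular, one must verify the uniform positivity of the cone-restricted integral $\int_{\Gamma\cup -\Gamma}(\theta\cdot\bv)^2|\theta\cdot\hat{\bfxi}|^{2s}\,d\sigma(\theta)$ over all unit $\bv,\hat{\bfxi}$ without any structural hypothesis on $\Gamma$. A secondary delicate point is the critical case $s=1/2$, where the cancellation condition \eqref{eq-KernelCancellationCondition} is required both to make sense of the principal value defining $\bbL\bu$ and $\mathbb{M}(\bfxi)$, and to control what would otherwise be a non-integrable imaginary contribution to the symbol.
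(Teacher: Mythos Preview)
Your approach is correct and genuinely different from the paper's. The paper does \emph{not} invert the symbol $\bbM(\bfxi)+\lambda\bbI$ directly; instead it runs the method of continuity, interpolating between $\alpha_1\DotLap^s+\lambda\bbI$ and $\bbL+\lambda\bbI$ via the family $\bbL_t+\lambda\bbI$, establishing uniform \textit{a~priori} estimates for $\bbL_t$ (Lemma~\ref{lma:GeneralKernel:L2AprioriEstimate:Ltu}), and invoking the already-known solvability of the base case $\alpha_1\DotLap^s+\lambda\bbI$. Your direct inversion is more elementary in that it avoids both the continuity argument and the need for a separately proved base case. The coercivity step is essentially shared: your compactness argument for the positivity of $\int_{\Gamma\cup-\Gamma}(\theta\cdot\bv)^2|\theta\cdot\hat\bfxi|^{2s}\,d\sigma(\theta)$ is exactly the function $\Psi(\bfeta,\bv)$ in the paper's proof, and your ``low-frequency loss absorbed into an additive constant'' when $r<\infty$ is the paper's device of extending the kernel from $\Lambda_r$ to $\Lambda$ and treating the tail as a bounded perturbation (the operator $\bbL_t^c$). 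What the paper's route buys is structural: the method-of-continuity framework separates the \textit{a~priori} estimate from the existence question and is the template the authors reuse for the integrable-kernel case and intend for $L^p$ generalizations where Plancherel is unavailable.

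One small point to tighten: the theorem asserts that the constant $C$ in front of $\Vnorm{\bff}_{L^2}$ is independent of $\lambda$, in particular for the middle term $\sqrt{\lambda}\,\Vnorm{\Ds\bu}_{L^2}$. Your AM--GM bound $2\sqrt{\lambda}\,|\bfxi|^s\le c|\bfxi|^{2s}+\lambda/c$ combined with the inverse bound $|(\bbM+\lambda\bbI)^{-1}|\le (c|\bfxi|^{2s}+\lambda-\lambda_0)^{-1}$ produces a factor $\frac{c|\bfxi|^{2s}+\lambda/c}{c|\bfxi|^{2s}+\lambda-\lambda_0}$, which blows up as $\lambda\downarrow\lambda_0$. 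The fix is easy and already implicit in your formula for $\langle\Re\bbM(\bfxi)\bv,\bv\rangle$: since that quadratic form is manifestly nonnegative, you also have $|(\bbM+\lambda\bbI)^{-1}|\le 1/\lambda$ for \emph{all} $\bfxi$. Use this sharper bound on the low-frequency region $c|\bfxi|^{2s}\le 2\lambda_0$ (where $|\bfxi|^s$ is bounded), and your original bound on the complementary region (where $c|\bfxi|^{2s}+\lambda-\lambda_0\ge \tfrac{c}{2}|\bfxi|^{2s}+\lambda$); in each region the resulting constant is independent of $\lambda$. The paper achieves the same $\lambda$-uniformity by expanding $\Vnorm{\widetilde{\bbL}_t\bu+\lambda\bu}_{L^2}^2$ for the \emph{extended} (untruncated) operator, where the coercivity has no $-\lambda_0$ correction, and only afterwards reintroducing the truncation as a bounded error.
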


For the solvability of the system corresponding to kernels in {\bf Class A}, we have the following result. 

\begin{theorem}[$L^2$ Solvability for general integrable kernels]\label{thm:GeneralL1KernelAsymmetric:L2Solvability}
Suppose that $\rho$ is in {\bf Class A}. Then for any $\lambda > 0$ and for any $\bff \in \big[ L^2(\bbR^d) \big]^d$ there exists a unique strong solution $\bu \in \big[ L^2(\bbR^d) \big]^d$ to the equation \eqref{eq-GeneralKernel-Equation} satisfying the estimate
\begin{equation}
\lambda \Vnorm{\bu}_{L^2} \leq C \Vnorm{\bff}_{L^2}\,,
\end{equation}
where $C = C(d, \rho) > 0$.
\end{theorem}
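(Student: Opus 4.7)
The plan is to use the Fourier transform to diagonalize $\bbL$, which in the integrable case is a bounded operator on $[L^2(\bbR^d)]^d$ and therefore free of the singular-integral complications that necessitated the Bessel-potential setting in Theorem \ref{thm-GeneralKernel-L2Solvability}.

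First I would rewrite
$$
\bbL \bu(\bx) = M \bu(\bx) - \intdm{\bbR^d}{\shapetensorby \bu(\bx+\by) \rho(\by)}{\by}\,, \qquad M := \intdm{\bbR^d}{\shapetensorby \rho(\by)}{\by}\,.
$$
Since $\shapetensorby$ is a rank-one orthogonal projection with operator norm one and $\rho \in L^1(\bbR^d)$, the matrix-valued kernel $K(\by) := \shapetensorby \rho(\by)$ lies in $L^1$, so both terms are bounded operators on $[L^2]^d$ and $\bbL$ itself maps $[L^2]^d$ into $[L^2]^d$. Taking the Fourier transform, a routine change of variables yields $\widehat{\bbL \bu}(\bfxi) = \Psi(\bfxi) \widehat{\bu}(\bfxi)$ with matrix-valued symbol
$$
\Psi(\bfxi) = \intdm{\bbR^d}{\shapetensorby \bigl(1 - e^{2\pi \iota \by \cdot \bfxi}\bigr) \rho(\by)}{\by}\,.
$$

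Next I would extract the key positivity. Its Hermitian part is
$$
\tfrac{1}{2}\bigl(\Psi(\bfxi) + \Psi(\bfxi)^*\bigr) = \intdm{\bbR^d}{\shapetensorby \bigl(1 - \cos(2\pi \by \cdot \bfxi)\bigr) \rho(\by)}{\by}\,,
$$
a nonnegative combination of rank-one projections, hence positive semi-definite. Thus for every $\bfxi \in \bbR^d$, every $\lambda > 0$, and every $\bfz \in \bbC^d$,
$$
\mathrm{Re}\bigl(\bfz^{*} (\Psi(\bfxi) + \lambda I) \bfz \bigr) \geq \lambda |\bfz|^2\,,
$$
so $\Psi(\bfxi) + \lambda I$ is invertible with $\Vnorm{(\Psi(\bfxi) + \lambda I)^{-1}}_{\mathrm{op}} \leq 1/\lambda$, uniformly in $\bfxi$.

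I would then define the candidate solution by
$$
\bu := \cF^{-1}\bigl((\Psi(\bfxi) + \lambda I)^{-1} \widehat{\bff}(\bfxi)\bigr)\,.
$$
Plancherel's theorem and the uniform bound give $\lambda \Vnorm{\bu}_{L^2} \leq \Vnorm{\bff}_{L^2}$, which is the announced estimate. Since $\Psi(\bfxi) + \lambda I$ is a bounded Fourier multiplier (the trivial estimate $|\Psi(\bfxi)|_{\mathrm{op}} \leq 2 \Vnorm{\rho}_{L^1}$ suffices), inverting the multiplier shows that $\bbL \bu + \lambda \bu = \bff$ holds in $[L^2]^d$, and therefore pointwise almost everywhere. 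Uniqueness is immediate: any $\bu \in [L^2]^d$ with $\bbL \bu + \lambda \bu = 0$ satisfies $(\Psi(\bfxi) + \lambda I)\widehat{\bu}(\bfxi) = 0$, and invertibility of the symbol forces $\widehat{\bu} \equiv 0$.

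There is no real obstacle here; the integrability of $\rho$ renders every step of the Fourier argument quantitative and avoids any principal-value or dominated-convergence subtleties. The only point worth verifying carefully is the positive semi-definiteness of $\mathrm{Re}\,\Psi(\bfxi)$, which follows directly from the fact that $\shapetensorby \succeq 0$ and $\rho \geq 0$.
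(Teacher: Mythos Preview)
Your argument is correct and in fact more direct than the paper's. The paper proceeds via the method of continuity: it first solves the equation for a radially symmetric kernel $\rho_0$ by explicitly diagonalizing the Fourier matrix symbol with rotations, then proves the \textit{a priori} estimate $\lambda\Vnorm{\bu}_{L^2}\le\Vnorm{\bff}_{L^2}$ for general $\rho$ in \textbf{Class A} (using exactly the same positivity of $\Re\,\bbM(\bfxi)$ that you isolate), and finally runs the continuity method along the family $\bbL_t$ with kernel $t\rho+(1-t)\rho_0$ to transfer solvability from the symmetric case to the general one. You bypass this detour entirely by observing that the uniform bound $\mathrm{Re}\bigl(\bfz^*(\Psi(\bfxi)+\lambda I)\bfz\bigr)\ge\lambda|\bfz|^2$ already forces $\Vnorm{(\Psi(\bfxi)+\lambda I)^{-1}}_{\mathrm{op}}\le 1/\lambda$, so existence, uniqueness, and the estimate all follow at once from the bounded-multiplier calculus. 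The paper's route has the virtue of being structurally parallel to the nonintegrable case (where the symbol is not so easily inverted pointwise and the method of continuity is genuinely needed), but for \textbf{Class A} kernels your direct inversion is cleaner and loses nothing.
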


$L^{2}$ solvability of problems of the type \eqref{eq-GeneralKernel-Equation} have been considered in \cite{Du-Zhou2011}. There, the $H^{2s,2}$ regularity of appropriately-defined weak solutions of \eqref{eq-GeneralKernel-Equation} with $\lambda = 1$ and $\rho$ positive and radially symmetric was obtained via the Fourier transform and inverting the positive definite operator $\bbL + \bbI$, where $\bbI$ is the $d \times d$ identity matrix. Without much difficulty, the same technique could be used to prove existence and uniqueness of strong solutions of \eqref{eq-GeneralKernel-Equation} with $\rho$ positive and radially symmetric. In this paper we prove this result of well posedness -- but for a wider class of kernels -- using the Fourier transform in a slightly different way, via \textit{a priori} estimates and the celebrated method of continuity \cite{krylov2008lectures}. The novelty here is that no symmetry assumptions of any kind are made on the kernel. We also assume that the kernel can vanish on a substantial set, specifically outside of a double cone whose apex is at the origin. The work in this paper may begin the mathematical analysis for a system of equations that potentially model anisotropic interactions in materials, generalizing the model equations in \cite{Du-Zhou2011}.

The $L^p$ theory, on the other hand, for equilibrium systems of equations of the type \eqref{eq-GeneralKernel-Equation} is relatively unstudied. In a recent work, the Dirichlet problem for equations resembling \eqref{eq-PeridynamicSystem} has been studied in \cite{KassmannMengeshaScott} where the authors  used Hilbert space techniques to prove existence and uniqueness of weak solutions satisfying a complementary condition (a ``volume constraint problem" in peridynamics). In the current work, in place of weak solutions, we consider strong solutions solving an equation almost everywhere. The equation is also posed on all of $\bbR^d$ rather than on a bounded domain. The Fourier matrix symbol associated to the operator $\bbL$ is
\begin{equation}\label{eq-FourierSymbolOfL}
\bbM(\bfxi) := \intdm{\bbR^d}{\shapetensorby \left( \e^{2 \pi \iota \by \cdot \bfxi} - 1 - 2 \pi \iota \bfxi \cdot \by \, \chi^{(s)}(\by) \right) \rho(\by) }{\by}\,,
\end{equation}
which in general lacks the differentiability necessary to apply classical multiplier theorems. The $L^p$ results obtained in this work are for the specific kernel $|\by|^{-d-2s}$. This choice of kernel allows for the use of Fourier multiplier theorems, specifically the Marcinkiewicz multiplier theorem. The following theorem states the second result of the paper. 

\begin{theorem}[$L^p$ Solvability for Specific Kernels]\label{thm-FracLapKernel-LpSolvability}
For $1 < p < \infty$ and corresponding to any $\bff \in L^p$ and for any $\lambda > 0$ the equation
\begin{equation}\label{eq-FracLapKernel-Equation}
\DotLap^s \bu + \lambda \bu = \bff \quad \text{ in } \bbR^d
\end{equation}
has a unique strong solution $\bu \in \big[ H^{2s,p}(\bbR^d) \big]^d$  satisfying the estimate
\begin{equation}\label{eq-FracLapKernel-LpAPrioriEstimate}
\Vnorm{\bu}_{H^{2s,p}} \leq C \Vnorm{\bff}_{L^p}\,,
\end{equation}
where $C = C(d,s,p,\lambda) > 0$.
\end{theorem}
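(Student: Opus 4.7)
The plan is to solve the equation explicitly via the Fourier transform and then verify that the resulting solution operator is bounded from $L^p$ into $H^{2s,p}$ by an application of the Marcinkiewicz multiplier theorem. Because the kernel $\rho(\by) = |\by|^{-d-2s}$ is radial and homogeneous, the Fourier symbol $\bbM(\bfxi)$ defined in \eqref{eq-FourierSymbolOfL} admits an explicit closed form. First I would show, by combining the identity $\shapetensorby = (\by \otimes \by)/|\by|^{2}$ with the standard computation of the Fourier transform of $|\by|^{-d-2s+2}$, that
\begin{equation*}
\bbM(\bfxi) = A(d,s)\,|\bfxi|^{2s}\,\bbI + B(d,s)\,|\bfxi|^{2s}\,\shapetensorbfxi,
\end{equation*}
for explicit positive constants $A,B>0$. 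In particular, the eigenvalues of $\bbM(\bfxi)+\lambda\bbI$ are $A|\bfxi|^{2s}+\lambda$ (with multiplicity $d-1$) and $(A+B)|\bfxi|^{2s}+\lambda$ (simple), so the matrix is uniformly positive definite on $\bbR^{d}$ and admits the explicit inverse
\begin{equation*}
(\bbM(\bfxi)+\lambda\bbI)^{-1} = \frac{1}{A|\bfxi|^{2s}+\lambda}\bbI - \frac{B|\bfxi|^{2s}}{(A|\bfxi|^{2s}+\lambda)\big((A+B)|\bfxi|^{2s}+\lambda\big)}\shapetensorbfxi.
\end{equation*}

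Next, for $\bff\in[\cS(\bbR^{d})]^{d}$ I would define the candidate solution by
\begin{equation*}
\bu := \cF^{-1}\Big((\bbM(\bfxi)+\lambda\bbI)^{-1}\widehat{\bff}(\bfxi)\Big),
\end{equation*}
which clearly satisfies \eqref{eq-FracLapKernel-Equation} in the pointwise sense. To upgrade this to the estimate \eqref{eq-FracLapKernel-LpAPrioriEstimate}, observe that by definition of the $H^{2s,p}$-norm it suffices to show the matrix-valued multiplier
\begin{equation*}
\bm(\bfxi) := (1+4\pi^{2}|\bfxi|^{2})^{s}(\bbM(\bfxi)+\lambda\bbI)^{-1}
\end{equation*}
defines a bounded operator on $[L^{p}(\bbR^{d})]^{d}$. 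I would apply the Marcinkiewicz multiplier theorem componentwise: the bound $|\bm(\bfxi)|\lesssim 1$ holds uniformly, since near the origin $\bm$ is controlled by $1/\lambda$, while as $|\bfxi|\to\infty$ the ratio $(1+4\pi^{2}|\bfxi|^{2})^{s}/(A|\bfxi|^{2s}+\lambda)$ tends to the constant $(4\pi^{2})^{s}/A$. The nontrivial step will be to verify the mixed-derivative conditions
\begin{equation*}
\sup_{\bfxi}\big|\xi_{i_{1}}\cdots \xi_{i_{k}}\,\partial_{i_{1}}\cdots\partial_{i_{k}}\bm(\bfxi)\big| \le C\quad (1\le i_{1}<\cdots<i_{k}\le d,\ k\le d),
\end{equation*}
on the entries of $\bm$. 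Each entry is a rational function in $\xi_{1},\ldots,\xi_{d}$ and $|\bfxi|^{2s}$, and differentiating in $\xi_{i}$ either differentiates the polynomial factors $\xi_{i}\xi_{j}$ (losing one factor of $\xi$) or differentiates $|\bfxi|^{2s}$ (producing $\xi_{i}/|\bfxi|^{2}$ times a bounded expression), so in both cases each differentiation contributes a factor comparable to $1/|\xi_{i}|$, which is exactly compensated by the $\xi_{i}$ prefactors in the Marcinkiewicz condition.

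With the multiplier bound in hand, the estimate $\Vnorm{\bu}_{H^{2s,p}}\le C\Vnorm{\bff}_{L^{p}}$ follows immediately for $\bff\in[\cS(\bbR^{d})]^{d}$, and a density argument extends both the existence of $\bu$ and the estimate to arbitrary $\bff\in[L^{p}(\bbR^{d})]^{d}$; the resulting $\bu$ solves \eqref{eq-FracLapKernel-Equation} a.e.\ since $\DotLap^{s}$ acts continuously from $H^{2s,p}$ to $L^{p}$ (again by Marcinkiewicz applied to $\bbM(\bfxi)/(1+4\pi^{2}|\bfxi|^{2})^{s}$). Uniqueness is the cleanest step: if $\bu\in[H^{2s,p}]^{d}$ satisfies the homogeneous equation, then $\bu\in[\cS'(\bbR^{d})]^{d}$ and the distributional identity $(\bbM(\bfxi)+\lambda\bbI)\widehat{\bu}=0$ combined with the pointwise invertibility of $\bbM+\lambda\bbI$ forces $\widehat{\bu}=0$. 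I expect the main obstacle to be the careful bookkeeping in the Marcinkiewicz verification: one must handle the different asymptotic regimes $|\bfxi|\lesssim 1$ and $|\bfxi|\gg 1$ of $\bm$ and track exactly how factors of $|\bfxi|^{2s}$ interact with the derivatives, since unlike the Mikhlin condition, Marcinkiewicz requires uniform bounds on all mixed partial derivatives of orders $0$ through $d$.
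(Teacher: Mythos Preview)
Your proposal is correct and follows essentially the same approach as the paper: compute the symbol $\bbM^{\Delta}_s(\bfxi) = (2\pi|\bfxi|)^{2s}(\ell_1\bbI + \ell_2\,\bfxi\otimes\bfxi/|\bfxi|^2)$, invert $\bbM^{\Delta}_s + \lambda\bbI$ explicitly, and show that both $(1+4\pi^2|\bfxi|^2)^{-s}(\bbM^{\Delta}_s + \lambda\bbI)$ and its inverse are $L^p$-multipliers via Marcinkiewicz. The only difference is bookkeeping in the Marcinkiewicz verification: the paper isolates the two scalar factors $m_1 = (1+4\pi^2|\bfxi|^2)^s/(\ell_1(4\pi^2|\bfxi|^2)^s+\lambda)$ and $m_2 = \ell_2(4\pi^2|\bfxi|^2)^s/((\ell_1+\ell_2)(4\pi^2|\bfxi|^2)^s+\lambda)$, handles $m_1$ via a weighted-dilation-invariant auxiliary function $F(\bfeta) = (|\eta_1|^2+|\bfeta'|^2)^s/(|\bfeta'|^{2s}+|\eta_2|^2)$, handles $m_2$ by direct differentiation of $G_a(\eta) = \eta^s/(\eta^s+a)$, and treats the Riesz-transform matrix $\bfxi\otimes\bfxi/|\bfxi|^2$ as a known multiplier separately---this is cleaner than tracking mixed partials of the full matrix entries, but your heuristic ``each $\partial_{\xi_i}$ produces a factor $\lesssim |\bfxi|^{-1}$'' (which is the Mikhlin bound, hence implies Marcinkiewicz) is correct in spirit and would go through with care.
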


We finally remark on the case of scalar operators related to $\bbL$, for example
$$
\cL u(\bx) := \pv \intdm{\bbR^d}{\Big( u(\bx + \by) - u(\bx) - \grad u(\bx) \, \by \chi^{(s)}(\by) \Big) \rho(\by)}{\by}\,.
$$
The operator $\cL$ is a nonlocal elliptic operator associated to a stochastic process called a Markov jump process, or specifically a L\'evy process. 
For instance, see \cite{banuelos2007levy} 
for some earlier work. The solvability in Sobolev and H\"older spaces for parabolic equations associated to the elliptic problem $\cL u + \lambda  u$ has been considered in \cite{Mikulevicius2014} via a probabilistic approach. L\'evy processes were studied using Fourier multipliers in \cite{banuelos2007levy}. The Fourier multipliers studied in \cite{banuelos2007levy} are quotients of symbols consisting of integrals. Because of their particular form, \textit{a priori} estimates for solutions to scalar-valued equations are easily obtained as a corollary. However, the results of \cite{banuelos2007levy} do not directly apply to the matrix symbols necessary for the analogous \textit{a priori} estimates of solutions to \eqref{eq-GeneralKernel-Equation}, as matrix inverses take the place of quotients in the symbols.

Our work is inspired by the paper \cite{DoKi12} where it was shown that the equation $\cL u + \lambda u = f$ -- with $\rho$ merely measurable and satisfying a certain cancellation condition -- is $L^p$-solvable. 
The paper applies maximum principle techniques to obtain important estimates. For the system \eqref{eq-GeneralKernel-Equation} an analogous $L^p$ solvability result remains unclear. The maximum principle techniques in \cite{DoKi12} do not apply to systems of the type \eqref{eq-GeneralKernel-Equation}.

With Theorem \ref{thm-GeneralKernel-L2Solvability} and  Theorem \ref{thm:GeneralL1KernelAsymmetric:L2Solvability}, we show the well posedness of the nonlocal hyperbolic system of wave equations
\begin{equation}\label{eq:WaveEquation-MR}
\begin{cases}
\p_{tt} \bu + \bbL \bu  + \lambda \bu = \bff\,, &\qquad  (\bx,t) \text{ in } \bbR^d \times [0,T]\,, \\
\bu(\bx,0) = \bu_0(\bx)\,, &\qquad \bx \text{ in } \bbR^d\,, \\
\p_t \bu(\bx,0) = \bv_0(\bx)\,, &\qquad \bx \text{ in } \bbR^d\,,
\end{cases}
\end{equation}
The main existence and uniqueness result we prove is the following.

\begin{theorem}\label{thm:WaveEqnSolvability} 
Let $T>0$, and let $\bff \in C^1 \left( [0,T] ; \big[ L^2(\bbR^d) \big]^d \right)$, $\bu_0 \in \big[ H^{2s,2}(\bbR^d) \big]^d$, and $\bv_0 \in \big[ H^{s,2}(\bbR^d) \big]^d$. 
Suppose that $\rho$ is an even function that is in {\bf Class B}. Suppose that $\lambda$ is a nonnegative constant satisfying $\lambda > \lambda_0 - 1$, where $\lambda_0$ is the constant appearing in Theorem \ref{thm-GeneralKernel-L2Solvability}. Then there exists a unique solution
\begin{equation*}
\bu \in 
	C \left( [0,T] ; \big[ H^{2s,2}(\bbR^d) \big]^d \right)
	\cap
	C^1 \left( [0,T] ; \big[ H^{s,2}(\bbR^d) \big]^d \right)
	\cap
	C^2 \left( [0,T] ; \big[ L^2(\bbR^d) \big]^d \right)
\end{equation*}
to \eqref{eq:WaveEquation-MR}. In the case that $\bff(\bx,t) =  \bff_0(\bx)$, then the solution $\bu$ additionally satisfies the conservation law 
\begin{equation}\label{eq:ConservationLaw-MR}
\begin{split}
\Vnorm{\p_t \bu(t)}_{L^2(\bbR^d)}^2 &+ \iintdm{\bbR^d}{\bbR^d}{\rho(\bx-\by) \left( \big( \bu(\bx,t) - \bu(\by,t) \big) \cdot \frac{\bx-\by}{|\bx-\by|} \right)^2 }{\by}{\bx}  \\
&+ \lambda \Vnorm{\bu(t)}_{L^2(\bbR^d)}^2 + 2 \intdm{\bbR^d}{\Vint{\bff_0,\bu(t)}}{\bx} \\
= \Vnorm{\p_t \bu_0}_{L^2(\bbR^d)}^2 &+ \iintdm{\bbR^d}{\bbR^d}{\rho(\bx-\by) \left( \big( \bu_0(\bx) - \bu_0(\by) \big) \cdot \frac{\bx-\by}{|\bx-\by|} \right)^2 }{\by}{\bx}  \\
&+ \lambda \Vnorm{\bu_0}_{L^2(\bbR^d)}^2 + 2 \intdm{\bbR^d}{\Vint{\bff_0,\bu_0}}{\bx}\,.
\end{split}
\end{equation}
If $\lambda_0 < 1$, then $\lambda$ can be taken to be $0$, that is, solving the system of equation given in \eqref{eq:GeneralKernel:WaveEquation}.
\end{theorem}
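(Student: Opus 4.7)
The plan is to reduce \eqref{eq:WaveEquation-MR} to a first-order abstract evolution system and invoke the Hille--Yosida theorem in the form of Lumer--Phillips, using Theorem \ref{thm-GeneralKernel-L2Solvability} to supply the range condition. Setting $\mathbf{U} := (\bu, \p_t \bu)^\top$ and $\mathbf{F} := (\mathbf{0}, \bff)^\top$, the problem becomes $\p_t \mathbf{U} = \mathcal{A}\mathbf{U} + \mathbf{F}$ with $\mathbf{U}(0) = (\bu_0, \bv_0)^\top$, where
\[
\mathcal{A} := \begin{pmatrix} 0 & \bbI \\ -(\bbL + \lambda \bbI) & 0 \end{pmatrix}
\]
is defined on $\mathcal{H} := [H^{s,2}(\bbR^d)]^d \times [L^2(\bbR^d)]^d$ with domain $D(\mathcal{A}) := [H^{2s,2}(\bbR^d)]^d \times [H^{s,2}(\bbR^d)]^d$. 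I would endow $\mathcal{H}$ with the energy inner product
\[
\langle (\bu_1, \bv_1), (\bu_2, \bv_2) \rangle_{\mathcal{H}} := \langle \bv_1, \bv_2 \rangle_{L^2} + a(\bu_1, \bu_2) + (\lambda + 1)\langle \bu_1, \bu_2 \rangle_{L^2},
\]
where $a(\bu, \mathbf{w}) := \langle \bbL \bu, \mathbf{w} \rangle_{L^2}$. Since $\rho$ is even, a change of variables identifies $a$ with the manifestly symmetric, nonnegative form $\tfrac12 \iintdm{\bbR^d}{\bbR^d}{\rho(\bx-\by)\bigl[(\bu(\bx)-\bu(\by))\cdot\tfrac{\bx-\by}{|\bx-\by|}\bigr]\bigl[(\mathbf{w}(\bx)-\mathbf{w}(\by))\cdot\tfrac{\bx-\by}{|\bx-\by|}\bigr]}{\by}{\bx}$, and Fourier estimates on the symbol \eqref{eq-FourierSymbolOfL} give $a(\bu, \bu) \leq C \Vnorm{\bu}_{H^{s,2}}^2$, making $\Vnorm{\cdot}_{\mathcal{H}}$ equivalent to the product norm on $\mathcal{H}$.

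Next I would apply Lumer--Phillips. A direct computation using the symmetry of $a$ gives
\[
\langle \mathcal{A} (\bu, \bv), (\bu, \bv) \rangle_{\mathcal{H}} = -a(\bu,\bv) - \lambda \langle \bu, \bv \rangle_{L^2} + a(\bv, \bu) + (\lambda+1)\langle \bv, \bu \rangle_{L^2} = \langle \bv, \bu \rangle_{L^2},
\]
so $\langle \mathcal{A} \mathbf{U}, \mathbf{U} \rangle_{\mathcal{H}} \leq \tfrac12 \Vnorm{\mathbf{U}}_{\mathcal{H}}^2$, showing $\mathcal{A} - \tfrac12 \bbI$ is dissipative. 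For the range condition, given $(\mathbf{g}, \mathbf{h}) \in \mathcal{H}$ I would solve $(\bbI - \mathcal{A})(\bu, \bv) = (\mathbf{g}, \mathbf{h})$: eliminating $\bv = \bu - \mathbf{g}$ reduces this to the elliptic equation
\[
\bbL \bu + (1 + \lambda)\bu = \mathbf{h} + \mathbf{g} \quad \text{in } \bbR^d.
\]
The hypothesis $\lambda > \lambda_0 - 1$ gives $1+\lambda > \lambda_0$, so Theorem \ref{thm-GeneralKernel-L2Solvability} produces a unique $\bu \in [H^{2s,2}(\bbR^d)]^d$, whence $\bv \in [H^{s,2}(\bbR^d)]^d$ and $\bbI - \mathcal{A}$ is surjective. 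Consequently $\mathcal{A}$ generates a quasi-contractive $C_0$-semigroup on $\mathcal{H}$, and for $\mathbf{U}_0 \in D(\mathcal{A})$ and $\mathbf{F} \in C^1([0,T];\mathcal{H})$ the inhomogeneous Hille--Yosida theorem yields a unique classical solution $\mathbf{U} \in C([0,T]; D(\mathcal{A})) \cap C^1([0,T]; \mathcal{H})$, from which the stated regularity of $\bu$ follows, with $\p_{tt} \bu = -\bbL \bu - \lambda \bu + \bff \in C([0,T]; [L^2(\bbR^d)]^d)$ read off from the equation.

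For the conservation law \eqref{eq:ConservationLaw-MR} in the autonomous case $\bff \equiv \bff_0$, I would pair the equation with $\p_t \bu$ and integrate over $\bbR^d$; symmetry of $a$ yields $\langle \bbL \bu, \p_t \bu \rangle_{L^2} = \tfrac12 \tfrac{d}{dt} a(\bu, \bu)$, and combining with $\tfrac{d}{dt}\Vnorm{\p_t \bu}_{L^2}^2 = 2\langle \p_t \bu, \p_{tt}\bu\rangle_{L^2}$ and $\tfrac{d}{dt}\Vnorm{\bu}_{L^2}^2 = 2\langle \bu, \p_t \bu\rangle_{L^2}$ produces the claimed identity after integration in time. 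The chief technical obstacle is the rigorous justification of the symmetric double-integral representation of $\langle \bbL \bu, \mathbf{w}\rangle_{L^2}$ for $\bu \in [H^{2s,2}(\bbR^d)]^d$ rather than for Schwartz functions, which requires a density argument together with the observation that, since $\rho$ is even, the $\chi^{(s)}$-correction term in the definition of $\bbL$ produces an odd integrand and hence drops out in the principal-value sense, leaving no boundary contributions.
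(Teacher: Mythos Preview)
Your proposal is correct and follows essentially the same semigroup strategy as the paper: the paper also rewrites \eqref{eq:WaveEquation-MR} as a first-order system on $\cH = [H^{s,2}]^d \times [L^2]^d$ with domain $[H^{2s,2}]^d \times [H^{s,2}]^d$, equips $\cH$ with the same energy inner product (written there via the square root $\bbL_\lambda^{1/2}$ rather than the bilinear form $a$, but these coincide), verifies monotonicity of the shifted operator, and obtains the range condition by reducing to an elliptic equation $\bbL\bu + (\lambda+c)\bu = \bg$ with $\lambda+c > \lambda_0$ solved by Theorem~\ref{thm-GeneralKernel-L2Solvability}. The only cosmetic differences are that the paper phrases things in the maximal monotone/Hille--Yosida language and removes the shift at the end via the substitution $\bW = e^t\bU$, whereas you invoke Lumer--Phillips and quasi-contractive semigroups directly; also note that your claim of norm equivalence requires the \emph{lower} bound $a(\bu,\bu)+(\lambda+1)\Vnorm{\bu}_{L^2}^2 \gtrsim \Vnorm{\bu}_{H^{s,2}}^2$ as well as the upper bound you cite, which the paper extracts from the Fourier estimates underlying Theorem~\ref{thm-GeneralKernel-L2Solvability}.
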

An analogous result when $\rho$ is radial and integrable can be found in \cite{Du-Zhou2011}. 

\section{$L^{2}$ solvability}
In this section we will use the method of continuity to prove the solvability of the coupled system of nonlocal equations.   The general result of the method of continuity is stated as follows. 
\begin{proposition}[The method of continuity]\label{M-o-C}
Suppose that $X$ is a Banach space and $V$ is a normed space.  Suppose also $T_0, T_1 : X\to V$ are bounded linear operators. Assume that there exists $C>0$ such that for $T_t = (1-t)T_0 + t T_1$ we have 
\[
\|x\|_{X} \leq C\|T_t x\|_{V},\quad \forall t\in[0, 1],  \forall x \in X. 
\]
Then $T_0$ is onto if and only if $T_1$ is onto. 
\end{proposition}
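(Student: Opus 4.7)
The plan is to prove that the set $S := \{t \in [0,1] : T_t \text{ is onto}\}$ is open in $[0,1]$ and that openness propagates with a uniform step size, so that if $S$ contains one endpoint it contains the other. I will work with the \emph{a priori} estimate as the single driving ingredient.

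First I would note that for each $t \in [0,1]$, the estimate $\|x\|_X \leq C \|T_t x\|_V$ forces $T_t$ to be injective with range closed in $V$: if $T_t x_n \to y$ in $V$, then $\{x_n\}$ is Cauchy in $X$ by the estimate, hence converges (as $X$ is Banach) to some $x$, and continuity of $T_t$ yields $T_t x = y$. In particular, if $t = s$ is a point where $T_s$ is onto, then $T_s : X \to V$ is a bijection whose inverse $T_s^{-1}$ is bounded with $\|T_s^{-1}\|_{V \to X} \leq C$.

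The main step is a contraction-mapping argument that promotes ontoness of $T_s$ to ontoness of $T_t$ for all $t$ with $|t-s|$ sufficiently small. Given $y \in V$, the equation $T_t x = y$ rewrites as
\[
T_s x = y - (t-s)(T_1 - T_0) x, \qquad \text{i.e.,}\qquad x = \Phi(x) := T_s^{-1}\bigl(y - (t-s)(T_1-T_0)x\bigr).
\]
Setting $M := \|T_1 - T_0\|_{X \to V}$, the estimate on $T_s^{-1}$ gives $\|\Phi(x_1) - \Phi(x_2)\|_X \leq C|t-s|M\|x_1 - x_2\|_X$, so $\Phi$ is a contraction on the Banach space $X$ whenever $|t-s| < \delta := 1/(2CM)$, and its unique fixed point provides the desired preimage of $y$.

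The crucial feature is that the threshold $\delta$ depends only on $C$ and $M$, not on $s$. Therefore, starting from the hypothesis that $T_0$ (or $T_1$) is onto, iterating this local step in increments of $\delta/2$ along the interval $[0,1]$ reaches the other endpoint in finitely many steps, yielding both implications. I do not anticipate any serious obstacle; the only mild subtlety is that $V$ is merely normed, which is why I verified closedness of the range of $T_t$ directly from the estimate rather than invoking the open mapping or bounded inverse theorem.
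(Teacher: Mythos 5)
Your proof is correct and is the standard argument for the method of continuity (it is essentially the proof given in Gilbarg--Trudinger, Theorem 5.2, and in Krylov's lecture notes, which is the reference the paper cites). The paper itself states Proposition \ref{M-o-C} without proof, so there is no in-paper argument to compare against; your contraction-mapping iteration with step size $\delta = 1/(2CM)$ independent of the base point is exactly the expected route. One tiny remark: the observation that each $T_t$ has closed range, while true, is never actually used in the argument -- once $T_s$ is assumed onto, the \emph{a priori} estimate alone gives $\|T_s^{-1}\|\le C$ directly; and the degenerate case $M = \|T_1-T_0\| = 0$ (where your $\delta$ is undefined) should be noted as trivial since then $T_0 = T_1$.
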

In our case, we take the operator $T_1$ to be $\mathbb{L} + \lambda \mathbb{I}$, where $\mathbb{L}$ is as defined in \eqref{defn-operator}.  
In the case of nonintegrable kernels, $T_0$ will be $\alpha_1 \DotLap^s + \lambda$ for some appropriately chosen $\lambda>0$, since the solvability of the system $\alpha_1 \DotLap^s \bu + \lambda \bu = \bff$ in $\big[ H^{2s,2}(\bbR^d) \big]^d$ is well-known,  see \cite{Du-Zhou2011} or Theorem \ref{thm-FracLapKernel-LpSolvability} for $p=2$. For the case of integrable kernels, we take the operator $T_0$ to be the same operator  $\mathbb{L} + \lambda \mathbb{I}$ but one with radial kernel where invertibility of the operator will be proved. In both cases, in order to apply the method of continuity we need to show boundedness of the operators and obtain {\em a priori} estimates for the corresponding operator $T_t$. 

\subsection{The nonintegrable case.}
Let us introduce the parametrized linear operators 
\begin{equation*}
- \bbL_t \bu(\bx) := \pv \intdm{\bbR^d}{ \shapetensorby \left( \bu(\bx+\by)-\bu(\bx) - \veps (\bu)(\bx) \by \, \chi^{(s)}(\by) \right) \, \rho_t(\by) }{\by}\,,
\end{equation*}
and the kernel $\rho_t$ is defined as
\begin{equation*}
\rho_t(\by) := t \, \rho(\by) + (1-t)\frac{\alpha_1}{|\by|^{d+2s}}\,.
\end{equation*}
Notice that $- \bbL_0 = \alpha_1\DotLap^s$ and $\bbL_1 = \bbL$.  In order to use the method of continuity, we need to show that the operators $\bbL_0$, and $\bbL_1$ are bounded from $\big[ H^{2s,2}(\bbR^d) \big]^d$ to $ \big[ L^2(\bbR^d) \big]^d$ and establish  the \textit{a priori} estimate 
for solutions of $\bbL_t \bu + \lambda \bu = \bff$ for $t \in [0,1]$. 
First let us prove that the operators are continuous. 
\begin{lemma}[Continuity of the parametrized operator: nonintegrable case]\label{thm-GeneralKernel-L2Continuity-Ltu} Suppose that $\rho$ is in {\bf Class B}. 
Then for each $t\in [0, 1]$ the operator $\bbL_t : \big[ H^{2s,2}(\bbR^d) \big]^d \to \big[ L^2(\bbR^d) \big]^d$ is continuous. More precisely, there exists a constant $C>0$ such that for all $t\in [0, 1]$,  and for all $\bu \in \big[ \cS(\bbR^d) \big]^d$ we have the estimate
\begin{equation}
\Vnorm{\bbL_t \bu}_{L^2} \leq C \Vnorm{\bu}_{H^{2s,2}}\,.
\end{equation}
\end{lemma}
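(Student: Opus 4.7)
The plan is to pass to the Fourier side and reduce the claim to a uniform pointwise bound on the matrix symbol of $\bbL_t$. For $\bu \in [\cS(\bbR^d)]^d$, I would first observe that $\shapetensorby \, \veps(\bu)(\bx) \, \by = \shapetensorby \, \grad \bu(\bx) \, \by$, because the rank-one tensor $\by \otimes \by$ symmetrizes the action of any $d \times d$ matrix. Substituting this into the definition of $\bbL_t$ and Fourier transforming yields $\cF[-\bbL_t \bu](\bfxi) = \bbM_t(\bfxi)\widehat{\bu}(\bfxi)$, where
$$\bbM_t(\bfxi) = \intdm{\bbR^d}{\shapetensorby \left( e^{2 \pi \iota \by \cdot \bfxi} - 1 - 2 \pi \iota \bfxi \cdot \by \, \chi^{(s)}(\by) \right) \rho_t(\by)}{\by}$$
is the symbol \eqref{eq-FourierSymbolOfL} with $\rho$ replaced by $\rho_t$. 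Two properties of $\rho_t$ that I plan to exploit are: (i) the uniform majorization $\rho_t(\by) \leq \max\{\alpha_1,\alpha_2\} |\by|^{-d-2s}$ for all $t \in [0,1]$; and (ii) the fact that $\rho_t$ inherits the cancellation condition \eqref{eq-KernelCancellationCondition} from $\rho$, since the radial piece $\alpha_1 |\by|^{-d-2s}$ satisfies it automatically by the parity of $y_i y_j y_k$.

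Next I would establish the pointwise bound $|\bbM_t(\bfxi)| \leq C |\bfxi|^{2s}$ with $C$ independent of $t$, by splitting the integration region at $|\by| = 1/|\bfxi|$. On the inner piece $\{|\by| \leq 1/|\bfxi|\}$, Taylor expansion in $\by$ yields
$$\left| e^{2 \pi \iota \by \cdot \bfxi} - 1 - 2 \pi \iota \bfxi \cdot \by \, \chi^{(s)}(\by) \right| \leq C \min\{|\bfxi||\by|, (|\bfxi||\by|)^2 \},$$
with the effective order dictated by whether $s < 1/2$, $s = 1/2$, or $s > 1/2$; integration against $|\by|^{-d-2s}$ then produces $C|\bfxi|^{2s}$. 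On the outer piece $\{|\by| > 1/|\bfxi|\}$ the bracket is controlled by $C(1 + |\bfxi||\by| \chi^{(s)}(\by))$; the $|\bfxi||\by|$ contribution, which only appears for $s > 1/2$, converges at infinity because $1-2s<0$, and the total is again $C|\bfxi|^{2s}$. With this symbol bound in hand, Plancherel's theorem delivers
$$\Vnorm{\bbL_t \bu}_{L^2}^2 = \intdm{\bbR^d}{|\bbM_t(\bfxi)|^2 |\widehat{\bu}(\bfxi)|^2}{\bfxi} \leq C \intdm{\bbR^d}{|\bfxi|^{4s} |\widehat{\bu}(\bfxi)|^2}{\bfxi} \leq C \Vnorm{\bu}_{H^{2s,2}}^2,$$
and continuity on the full Bessel space then follows by the density of $[\cS(\bbR^d)]^d$.

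The main technical obstacle is expected to be the borderline case $s = 1/2$. Because $\chi^{(s)} = \mathds{1}_{B_1}$ is active on the annulus $\{1/|\bfxi| < |\by| < 1\}$ and $\rho$ is not assumed to be symmetric, a crude estimate of the linear correction term on this annulus produces a divergence of order $|\bfxi|\log|\bfxi|$. This is precisely where \eqref{eq-KernelCancellationCondition} does its work: writing the offending integral in polar coordinates and invoking the vanishing of $\int_{\partial B_\mu}(y_i y_j y_k / |\by|^2) \rho_t(\by) \, d\sigma$ for every $\mu > 0$ makes the annular contribution vanish, restoring the clean estimate $|\bbM_t(\bfxi)| \leq C|\bfxi|$. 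Apart from this endpoint subtlety, the remaining work is routine integral estimation, and the constant $C$ obtained depends only on $d$, $s$, $\alpha_1$, $\alpha_2$, and $r$.
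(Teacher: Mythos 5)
Your proof is correct and follows essentially the same approach as the paper: both reduce to the pointwise symbol bound $|\bbM_t(\bfxi)| \leq C(2\pi|\bfxi|)^{2s}$ via the uniform upper bound $\rho_t(\by) \leq \alpha_2|\by|^{-d-2s}$, a case split on $s$, the cancellation condition \eqref{eq-KernelCancellationCondition} at $s = 1/2$, and Plancherel, with the only cosmetic difference being that the paper rescales $\bh = 2\pi|\bfxi|\by$ to exhibit the $|\bfxi|^{2s}$ factor while you split the integral at $|\by| = 1/|\bfxi|$. One small imprecision worth noting: the stated pointwise bound $\bigl|e^{2\pi\iota\by\cdot\bfxi} - 1 - 2\pi\iota\,\bfxi\cdot\by\,\chi^{(s)}(\by)\bigr| \leq C\min\{|\bfxi||\by|,(|\bfxi||\by|)^2\}$ is wrong when $s < 1/2$ (there $\chi^{(s)} \equiv 0$ and the bracket is $e^{\iota\theta}-1$, which is comparable to $|\theta|$, not $\theta^2$, near zero), but you hedge correctly with ``effective order dictated by $s$,'' and the integration over the inner region still yields $C|\bfxi|^{2s}$ because $1-2s>0$ in that case, so the conclusion stands.
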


\begin{proof}
%
Recalling that the symbol of $\mathbb{L}$ is the matrix symbol $\bbM(\bfxi)$ given by \eqref{eq-FourierSymbolOfL}, to prove continuity it suffices to show that there exists a constant $C = C(d,s,\alpha_2)$ such that
\begin{equation}
|\bbM_t(\bfxi)| \leq C (2 \pi |\bfxi|)^{2s}\,,
\end{equation}
where $\bbM_t(\bfxi)$ is the matrix symbol associated to $\bbL_t$. To that end, if  $s\in (0, {1\over 2})$, by definition $\mathbb{M}_t(\bfxi) = \intdm{\bbR^d}{\shapetensorby \big(e^{2\pi\imath\by\cdot \bfxi} - 1\big)\rho_t(\by) }{\by}$.  By using the upper bound $\rho_t(\by) \leq \alpha_2 |\by|^{-d-2s}$, we obtain 
\begin{equation*}
|\bbM_t(\bfxi)| \leq  \alpha_2 \intdm{\bbR^d}{\frac{1}{|\by|^{d+2s}}  \Big|e^{2\pi\imath\by\cdot \bfxi} - 1\Big|}{\by}. 
\end{equation*}
 Now a simple calculation shows that $  |e^{2\pi\imath\by\cdot \bfxi} - 1| =  \sqrt{2(1-\cos ({2\pi\by\cdot \bfxi}))}$.  Using this and by making the substitution $\bh = 2 \pi |\bfxi| \by$, we obtain 
\[
\begin{split}
|\bbM_t(\bfxi)| &\leq  \alpha_2\intdm{\bbR^d}{\frac{1}{|\by|^{d+2s}}\sqrt{2(1-\cos ({2\pi\by\cdot \bfxi}))}} {\by} \\
&\leq \alpha_2 (2 \pi |\bfxi|)^{2s} \intdm{\bbR^d}{\frac{\sqrt{2}}{|\bh|^{d+2s}} \sqrt{1-\cos \left( \bh \cdot \frac{\bfxi}{|\bfxi|}\right)}} {\bh}\,.
\end{split}
\]
Notice that the last integral is uniformly bounded in ${\bfxi}$, since $1-\cos(a) \approx |a|^2$ for $a$ near $0$ and $s < 1/2$.

When $s = 1/2$, we have by the cancellation condition \eqref{eq-KernelCancellationCondition} on $\rho$ that
\begin{equation*}
\bbM_t(\bfxi) = \intdm{\bbR^d}{\shapetensorby \bigg( \e^{2 \pi \iota \by \cdot \bfxi} - 1 - 2 \pi \iota \bfxi \cdot \by \, \mathds{1}_{B_{1/ (2 \pi |\bfxi|)}}(\by) \bigg) \, \rho_t(\by) }{\by}\,.
\end{equation*}
Then by the same substitution $\bh = 2 \pi |\bfxi| \by$ we have
\begin{equation*}
\begin{split}
|\bbM_t(\bfxi)| &\leq \alpha_2 \intdm{\bbR^d}{ \frac{1}{|\by|^{d+1}} \Big( | 1 - \cos(2 \pi \by \cdot \bfxi)| + |\sin(2 \pi \by \cdot \bfxi ) - 2 \pi \bfxi \cdot \by \, \mathds{1}_{B_{1/ (2 \pi |\bfxi|)}}(\by)|  \Big) }{\by} \\
	&\leq \alpha_2 (2 \pi |\bfxi|)^{2s} \intdm{\bbR^d}{ \frac{1}{|\bh|^{d+1}} \left( \left| 1 - \cos \left(\bh \cdot \frac{\bfxi}{|\bfxi|} \right) \right| + \left| \sin \left(\bh \cdot \frac{\bfxi}{|\bfxi|} \right) -  \frac{\bfxi}{|\bfxi|} \cdot \bh \, \mathds{1}_{B_1}(\bh) \right|  \right)}{\bh} \\
	&\leq C (2 \pi |\bfxi| )^{2s}\,.
\end{split}
\end{equation*}
The integral is again uniformly bounded in $\bfxi$ since $|\sin(a) - a | \approx |a|^3$ for $a$ near $0$ and since the term $\frac{\bfxi}{|\bfxi|} \cdot \bh \mathds{1}_{B_1}(\bh)$ vanishes for $|\bh| \geq 1$.

When $s \in ({1\over 2}, 1)$, again by using the upper bound on $\rho$ and then making the substitution $\bh = 2 \pi |\bfxi| \by$,
\begin{align*}
|\bbM_t(\bfxi)| &\leq  \alpha_2 \intdm{\bbR^d}{\frac{1}{|\by|^{d+2s}} \bigg( \big| 1 - \cos(2 \pi \by \cdot \bfxi ) \big| + \left| \sin(2 \pi \by \cdot \bfxi) - 2 \pi \by \cdot \bfxi  \right|  \bigg) }{\by} \\
	&=  \alpha_2 (2 \pi |\bfxi|)^{2s} \intdm{\bbR^d}{\frac{1}{|\bh|^{d+2s}} \bigg( \left| 1 - \cos \left( \bh \cdot \frac{\bfxi}{|\bfxi|} \right) \right| + \left| \sin \left( \bh \cdot \frac{\bfxi}{|\bfxi|} \right) - \bh \cdot \frac{\bfxi}{|\bfxi|}  \right|  \bigg) }{\bh} \\
&\leq C (2 \pi |\bfxi|)^{2s}\,,
\end{align*}
since the integral is once again uniformly bounded in $\bfxi$. 
\end{proof}

The next theorem gives us \textit{a priori} estimates for solutions of the system that imply uniqueness of strong solutions. It is also the key estimate to implement  the  method of continuity to prove existence of a solution. 

\begin{lemma}[\textit{A priori} estimates for parametrized operator: nonintegrable case]\label{lma:GeneralKernel:L2AprioriEstimate:Ltu} Suppose that $\rho$ is in {\bf Class B}.  
Let $\lambda > 0$ be a constant and let $\bff \in \big[ L^2(\bbR^d) \big]^d$. Suppose $\bu \in \big[ H^{2s,2}(\bbR^d) \big]^d$ satisfies the equation $\bbL_t \bu + \lambda \bu = \bff$ in $\bbR^d$. Then there exist constants $N_1 = N_1(d,s,\Lambda, \alpha_1) > 0$ and $N_2 = N_2(\alpha_1,\Lambda,r)>0$ such that
\begin{equation}\label{eq-GeneralKernel-L2AprioriEstimate-Ltu}
\Vnorm{\Dss \bu}_{L^2} + \sqrt{\lambda} \Vnorm{\Ds \bu}_{L^2} + \lambda \Vnorm{\bu}_{L^2} \leq N_1 \Big( \Vnorm{\bff}_{L^2} + N_2 \Vnorm{\bu}_{L^2} \Big)\,.
\end{equation}
As a consequence, 
\begin{equation}\label{eq-GeneralKernel-FullyEquippedAPrioriEstimate-Ltu}
\Vnorm{\Dss \bu}_{L^2} + \sqrt{\lambda} \Vnorm{\Ds \bu}_{L^2} + \big(  \lambda - N_1 N_2 \big) \Vnorm{\bu}_{L^2} \leq N_1 \Vnorm{\bff}_{L^2}\,.
\end{equation}
\end{lemma}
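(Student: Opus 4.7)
The strategy is to apply the Fourier transform, derive a pointwise scalar inequality on $|\widehat{\bu}(\bfxi)|$ that captures all three norms on the left-hand side at once, and conclude via Plancherel. Let $\bbA_t(\bfxi)$ denote the matrix Fourier multiplier of $\bbL_t$. Taking the Fourier transform of $\bbL_t \bu + \lambda \bu = \bff$ gives
\begin{equation*}
\bigl( \bbA_t(\bfxi) + \lambda \bbI \bigr) \widehat{\bu}(\bfxi) = \widehat{\bff}(\bfxi), \qquad \text{a.e. } \bfxi \in \bbR^d.
\end{equation*}
A direct calculation starting from \eqref{eq-FourierSymbolOfL} gives, for every $v \in \bbC^d$,
\begin{equation*}
\Re \langle \bbA_t(\bfxi) v, v \rangle_{\bbC^d} = \intdm{\bbR^d}{ (1 - \cos(2\pi \by \cdot \bfxi)) \frac{|\by \cdot v|^2}{|\by|^2} \rho_t(\by) }{\by},
\end{equation*}
since the $\chi^{(s)}$-correction in the symbol contributes only to the imaginary part.

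The key step is the uniform coercivity bound: there exist $c_1 = c_1(d,s,\Lambda,\alpha_1) > 0$ and $C_0 = C_0(\alpha_1, \Lambda, r) \geq 0$ such that
\begin{equation*}
\Re \langle \bbA_t(\bfxi) v, v \rangle_{\bbC^d} \geq c_1 (2\pi |\bfxi|)^{2s} |v|^2 - C_0 |v|^2
\end{equation*}
for all $v \in \bbC^d$, $\bfxi \in \bbR^d$, and $t \in [0,1]$. To prove this I would first use the uniform-in-$t$ lower bound $\rho_t(\by) \geq \alpha_1 |\by|^{-d-2s} \mathds{1}_{\Lambda_r}(\by)$, then rescale $\bh = 2\pi|\bfxi|\by$ to extract a factor of $(2\pi|\bfxi|)^{2s}$ and leave a truncated integral over $\Lambda_{2\pi|\bfxi|r}$. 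In the untruncated limit spherical coordinates give
\begin{equation*}
J_\infty(\hat\bfxi, v) := c_s \int_{\Gamma \cup (-\Gamma)} |\omega \cdot v|^2 |\omega \cdot \hat\bfxi|^{2s} \, d\sigma(\omega), \qquad \hat\bfxi = \bfxi/|\bfxi|,
\end{equation*}
a continuous function on $\bbS^{d-1} \times \bbS^{d-1}$. It cannot vanish on any unit pair $(\hat\bfxi, v)$: otherwise $\Gamma \cup (-\Gamma)$ would lie up to $\sigma$-null sets inside the union of the two great circles $\{\omega \cdot v = 0\}$ and $\{\omega \cdot \hat\bfxi = 0\}$, each of zero $(d-1)$-Hausdorff measure, contradicting the positive-measure hypothesis on $\Gamma$. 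Compactness then yields $\inf J_\infty \geq c_\Gamma > 0$, and the truncation error (present only for bounded $|\bfxi|$) is absorbed into the $C_0 |v|^2$ term.

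Once coercivity is established, Cauchy--Schwarz applied to the complex inner product gives $|(\bbA_t + \lambda\bbI) v| \geq \Re\langle (\bbA_t + \lambda)v, v\rangle / |v|$, so applied at $v = \widehat{\bu}(\bfxi)$ and combined with the Fourier-side equation,
\begin{equation*}
\bigl( c_1 (2\pi |\bfxi|)^{2s} + \lambda \bigr) |\widehat{\bu}(\bfxi)| \leq |\widehat{\bff}(\bfxi)| + C_0 |\widehat{\bu}(\bfxi)|.
\end{equation*}
Squaring (both sides are nonnegative), expanding the left-hand side, integrating over $\bfxi$, and invoking Plancherel produces
\begin{equation*}
c_1^2 \Vnorm{\Dss \bu}_{L^2}^2 + 2 c_1 \lambda \Vnorm{\Ds \bu}_{L^2}^2 + \lambda^2 \Vnorm{\bu}_{L^2}^2 \leq 2 \Vnorm{\bff}_{L^2}^2 + 2 C_0^2 \Vnorm{\bu}_{L^2}^2.
\end{equation*}
Taking square roots termwise and summing then yields \eqref{eq-GeneralKernel-L2AprioriEstimate-Ltu} with $N_1$ depending on $c_1$ and $N_2 = C_0$, and \eqref{eq-GeneralKernel-FullyEquippedAPrioriEstimate-Ltu} follows by rearranging. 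The main obstacle is the coercivity step itself: absent any symmetry assumption on $\rho$, and with its support only on the truncated double cone $\Lambda_r$, the uniform-in-$\hat\bfxi$ positivity of $J_\infty$ is not automatic, and the positive $(d-1)$-Hausdorff measure of $\Gamma$ is precisely what prevents degenerate directions in the lower bound.
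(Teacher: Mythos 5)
Your proof is correct, and the route you take is genuinely different from the paper's, even though both rest on the same geometric fact (the positivity of the angular integral $J_\infty$ over the double cone, forced by $\Gamma$ having positive $\mathcal{H}^{d-1}$-measure, combined with compactness of the unit spheres). The two places where you diverge from the paper are worth noting. First, to handle the truncation to $\Lambda_r$: the paper extends the kernel by adding $\tfrac{t\alpha_1}{|\by|^{d+2s}}\mathds{1}_{\Lambda\cap\complement B_r}$ to get a kernel bounded below on all of $\Lambda$, and then estimates the correction operator $\bbL_t^c$ as a bounded map on $L^2$ (with norm $\sim r^{-2s}$). You instead keep the truncated kernel, rescale, and absorb the deficit for small $|\bfxi|$ directly into a $-C_0|v|^2$ term in the symbol coercivity bound — this is cleaner, bypassing the auxiliary operator entirely, at the cost of needing a uniform-in-$(\hat\bfxi,v)$ estimate on the truncated angular integral (which follows from Dini's theorem, since the truncated integrals increase monotonically to the continuous limit $J_\infty$ on a compact set; you should state this explicitly). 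Second, to handle the asymmetry of $\rho$: the paper splits $\widetilde\rho_t$ into even and odd parts, proves $L^2$-orthogonality of $\widetilde\bbL_t^e\bu$ and $\widetilde\bbL_t^o\bu$, and discards the odd contribution; you instead use the pointwise Cauchy--Schwarz inequality $|M v| \geq \Re\langle Mv, v\rangle/|v|$ on the Fourier side, which discards the imaginary part of the symbol in one step. Both devices throw away the same information, but yours works entirely at the level of a scalar pointwise inequality on $|\widehat\bu(\bfxi)|$ and avoids the operator-level orthogonality argument. The resulting constants have the same parameter dependence ($N_1$ free of $r$, $N_2 \sim r^{-2s}$), so the lemma is recovered with the same structure.
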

\begin{proof}
Using the continuity of $\mathbb{L}_t$, Lemma \ref{thm-GeneralKernel-L2Continuity-Ltu} and the fact that  $\big[ C^{\infty}_c(\bbR^d) \big]^d$ is dense in $\big[ H^{2s,2}(\bbR^d) \big]^d$, it suffices to show \eqref{eq-GeneralKernel-L2AprioriEstimate-Ltu} for $\bu \in \big[ C^{\infty}_c(\bbR^d) \big]^d$. 
To accomplish this, we introduce the modified kernel
\begin{equation*}
\widetilde{\rho}_t(\by) := \rho_{t}(\by) + \frac{t\alpha_1 }{|\by|^{d+2s}}\chi_{ \Lambda \cap \complement B_r}(\by)=
	\begin{cases}
		\frac{t\alpha_1 }{|\by|^{d+2s}} +\rho_{t}(\by)&\quad \by \in \Lambda \cap \complement B_r\,, \\
		\rho_t(\by) &\quad \text{otherwise}\,, \\
	\end{cases}
\end{equation*}
and the associated operator $\widetilde{\bbL}_t$; i. e.  $\bbL_t$ with $\widetilde{\rho}_t$ in place of $\rho_t$. Note that\begin{equation}
\widetilde{\rho}_t(\by) \geq 0 \text{ on } \bbR^d\,, \qquad  \frac{\alpha_1}{|\by|^{d+2s}} \leq \widetilde{\rho}_t(\by) \leq \frac{\alpha_2}{|\by|^{d+2s}} \quad \text{for} \,\, \by \in \Lambda\,, t\in [0, 1]\,.
\end{equation}
Then we have that 
\begin{align*}
\widetilde{\bbL}_t \bu(\bx) &= \bbL_t \bu(\bx) + \intdm{\bbR^d}{\shapetensorby \Big( \bu(\bx+\by) - \bu(\bx) - \veps(\bu)(\bx) \by \, \chi^{(s)}(\by) \Big) \frac{t\alpha_1  \, \mathds{1}_{\Lambda \cap \complement B_r}(\by)}{|\by|^{d+2s}}}{\by} \\
&= \bbL_t \bu(\bx) + \intdm{\bbR^d}{\shapetensorby \Big( \bu(\bx+\by) - \bu(\bx) \Big) \frac{t\alpha_1 \, \mathds{1}_{\Lambda \cap \complement B_r}(\by)}{|\by|^{d+2s}}}{\by} \\
&:= \bbL_t \bu(\bx) + \bbL_t^c \bu(\bx)\,.
\end{align*}
The second equality follows from the definition of the function $\chi^{s}(\by)$, the cancellation condition \eqref{eq-KernelCancellationCondition}, and the fact that the function $\mathds{1}_{\Lambda \cap \complement B_r}(\by)$ is an even function. 
 It follows by the triangle inequality that 
\begin{equation}\label{eq-GeneralKernel-L2AprioriEstimateProof-I-Ltu}
\Vnorm{\widetilde{\bbL}_t \bu + \lambda \bu}_{L^2} \leq \Vnorm{\bbL_t \bu + \lambda \bu }_{L^2} +  \Vnorm{\bbL_t^c \bu}_{L^2}\,.
\end{equation}
The operator $\bbL_t^c \bu(\bx)$ has a Fourier matrix symbol that belongs to $\big[ L^{\infty}(\bbR^d) \big]^{d \times d}$, and is therefore bounded from $\big[L^2(\bbR^d) \big]^d$ to $\big[L^2(\bbR^d) \big]^d$.
To be precise, 
\begin{align*}
\left| \widehat{\bbL_t^c \bu}(\bfxi) \right| &= \left| t\alpha_1 \intdm{\Lambda \cap \complement B_r}{\shapetensorby \frac{\e^{2 \pi \iota \by \cdot \bfxi}-1}{|\by|^{d+2s}}}{\by} \, \,  \widehat{\bu}(\bfxi) \right| \\
&\leq 2 \alpha_1\, \int_{(\Gamma \cup - \Gamma)} d\sigma(\nu) \intdmt{r}{\infty}{\tau^{-1-2s}}{\tau} \, \,  |\widehat{\bu}(\bfxi)| \\
&:= N_2 |\widehat{\bu}(\bfxi)|\,,
\end{align*}
where $d\sigma$ is the surface measure on $\bbS^{d-1}$. Thus, $\Vnorm{\bbL_t^c \bu}_{L^2} 
 \leq N_3  \|{\bf u}\|_{L^{2}}$. 
Next we need to show that
\begin{equation}\label{eq-GeneralKernel-L2AprioriEstimateProof-II-Ltu}
N_1 \Vnorm{\widetilde{\bbL}_t \bu + \lambda \bu}_{L^2} \geq  \Vnorm{\Dss \bu}_{L^2} + \sqrt{\lambda} \Vnorm{\Ds \bu}_{L^2} + \lambda \Vnorm{\bu}_{L^2}\,.
\end{equation}
Note that
\[
\Vnorm{\widetilde{\bbL}_t \bu + \lambda \bu}^{2}_{L^2}  = \intdm{\bbR^d}{|\widetilde{\bbL}_t \bu|^2}{\bx} + \lambda\intdm{\bbR^d}{\Vint{\widetilde{\bbL}_t \bu, \bu}}{\bx} + \lambda^{2}\|{\bf u}\|^{2}_{L^{2}}\,,
\]
and thus to show \eqref{eq-GeneralKernel-L2AprioriEstimateProof-II-Ltu} it suffices to estimate the first two terms.  To that end, we begin writing $\widetilde{\rho}_t$ as a sum of its even and odd parts:
$$
\widetilde{\rho}_t = \widetilde{\rho}_t^e + \widetilde{\rho}_t^o, \quad \widetilde{\rho}_t^e(\by) = \frac{1}{2}(\widetilde{\rho}_t(\by)+\widetilde{\rho}_t(-\by)), \quad \widetilde{\rho}_t^o(\by) = \frac{1}{2}(\widetilde{\rho}_t(\by) - \widetilde{\rho}_t(-\by))\,.
$$
$\widetilde{\rho}_t^e$ satisfies the same assumptions as $\widetilde{\rho}_t$. Let $\widetilde{\bbL}_t^e$ and $\widetilde{\bbL}_t^o$ be the operators with kernels $\widetilde{\rho}_t^e$ and $\widetilde{\rho}_t^o$ respectively. Observe that $\widetilde{\mathbb{L}}_t = \widetilde{\bbL}_t^e  +\widetilde{\bbL}_t^o $ and that for any ${\bf u}\in \big[ C_c^{\infty}(\mathbb{R}^{d}) \big]^d$ we have 
$$
\intdm{\bbR^d}{\Vint{\widetilde{\bbL}_t^e \bu , \widetilde{\bbL}_t^o \bu}}{\bx} = 0\,.
$$
Thus,
\begin{align*}
\intdm{\bbR^d}{|\widetilde{\bbL}_t \bu|^2}{\bx} &\geq \intdm{\bbR^d}{|\widetilde{\bbL}_t^e \bu(\bx)|^2}{\bx} = \intdm{\bbR^d}{|\widehat{\widetilde{\bbL}_t^e \bu}(\bfxi)|^2}{\bfxi} 
= \intdm{\bbR^d}{|\widetilde{\bbM}_t^e(\bfxi)\widehat{\bu}(\bfxi)|^2}{\bfxi}\,,
\end{align*}
where
$$
\widetilde{\bbM}_t^e(\bfxi) := \intdm{\bbR^d}{ \shapetensorby \big( 1- \cos(2 \pi \by \cdot \bfxi) \big) \widetilde{\rho}_t^e(\by)  }{\by}\,.
$$
For every $\bfxi \in \bbR^d$, the matrix $\widetilde{\bbM}_t^e(\bfxi)$ is symmetric with real entries, and therefore, the least of the eigenvalues of $\widetilde{\bbM}_t^e(\bfxi)$ is given by  $\min_{\bv \in \bbS^{d-1}} \bv^{\intercal} \widetilde{\bbM}_t^e(\bfxi) \bv$. We will estimate the smallest eigenvalue from below as as function of $\bfxi$.  
By making the substitution $\bh = 2 \pi |\bfxi| \by$, we see that 
\begin{align*}
\min_{\bv \in \bbS^{d-1}} \bv^{\intercal} \widetilde{\bbM}_t^e(\bfxi) \bv  &= \min_{\bv \in \bbS^{d-1}} \intdm{\bbR^d}{ \big( 1-\cos(2 \pi \by \cdot \bfxi) \big) \left| \bv \cdot \frac{\by}{|\by|} \right|^2 \widetilde{\rho}_t^e(\by)}{\by} \\
&\geq \min_{\bv \in \bbS^{d-1}} \alpha_1 \intdm{\Lambda}{\frac{\big( 1-\cos(2 \pi \by \cdot \bfxi) \big)}{|\by|^{d+2s}} \left| \bv \cdot \frac{\by}{|\by|} \right|^2 }{\by} \\
&= \min_{\bv \in \bbS^{d-1}} \alpha_1 \big( 2 \pi |\bfxi| \big)^{2s} \intdm{\Lambda}{ \frac{1 - \cos \left( 2 \pi \bh \cdot \frac{\bfxi}{|\bfxi|} \right)}{|\bh|^{d+2s}}  \left| \bv \cdot \frac{\bh}{|\bh|} \right|^2 }{\bh} \\
&= \min_{\bv \in \bbS^{d-1}} \alpha_1 \big( 2 \pi |\bfxi| \big)^{2s} \Psi \left( \frac{\bfxi}{|\bfxi|} \,, \bv \right)\,,
\end{align*}
where $\Psi : \bbS^{d-1} \times \bbS^{d-1} \to \bbR$ defined by $\Psi(\boldsymbol{\eta}, \bv) := \intdm{\Lambda}{ \frac{1 - \cos \left( 2 \pi \bh \cdot \boldsymbol{\eta} \right)}{|\bh|^{d+2s}}  \left| \bv \cdot \frac{\bh}{|\bh|} \right|^2 }{\bh} $. The lower bound $\widetilde{\rho}_t^e(\by) \geq  \alpha_1$ can be used since the integrand is nonnegative. Since $\Lambda$ is a double cone with apex at the origin, $\Psi$ is a positive function. $\Psi$ is also clearly continuous on the compact set $\bbS^{d-1} \times \bbS^{d-1}$ so there exists a positive constant $C = C(d,s,\Lambda)$ such that
$
\min\limits_{\boldsymbol{\eta}, \bv \in \bbS^{d-1}} \Psi(\boldsymbol{\eta},\bv) = C > 0\,.
$  We use this to estimate the least eigenvalue of the matrix.
We conclude that for any $\bfxi \neq 0$, we have $\min\limits_{\bv \in \bbS^{d-1}} \bv^{\intercal} \widetilde{\bbM}_t^e(\bfxi) \bv \geq C \big( 2 \pi |\bfxi| \big)^{2s}$. 
We also use this lower bound to estimate the smallest eigenvalue of $(\widetilde{\bbM}_t^e)^{\intercal}(\bfxi) \, \widetilde{\bbM}_t^e(\bfxi)$ from below. Since $\widetilde{\bbM}_t^e(\bfxi)$ is symmetric, 
\begin{align*}
\min_{\bv \in \bbS^{d-1}} \bv^{\intercal} (\widetilde{\bbM}_t^e)^{\intercal}(\bfxi) \, \widetilde{\bbM}_t^e(\bfxi) \bv &= \min \{ \beta^2 : \beta \text{ is an eigenvalue of } \widetilde{\bbM}_t^e(\bfxi) \}\\
&\geq C(d,s,\Lambda,\alpha_1) \big| \big( 2 \pi |\bfxi| \big)^{2s} \big|^2\,.
\end{align*}
Therefore, 
\begin{equation}\label{eq-L2AprioriEstimateInProof1-Ltu}
\begin{split}
\intdm{\bbR^d}{|\widetilde{\bbL}_t \bu |^2}{\bx} &= \intdm{\bbR^d}{\Vint{\widetilde{\bbM}_t^e(\bfxi)\widehat{\bu}(\bfxi),\widetilde{\bbM}_t^e(\bfxi)\widehat{\bu}(\bfxi)}}{\bfxi} \\
&\geq C \intdm{\bbR^d}{\left| \big( 2 \pi |\bfxi| \big)^{2s} \widehat{\bu}(\bfxi) \right|^2}{\bfxi} = C \Vnorm{\Dss \bu}_{L^2}^2\,.
\end{split}
\end{equation}
Similarly, by applying Plancherel's theorem and noting that the matrix $\widetilde{\bbM}_t(\bfxi) := \int_{\bbR^d} \left( \frac{\by \otimes \by}{|\by|^2} \right) \big( \cos(2 \pi \bfxi \cdot \by) -1 \big) \widetilde{\rho}_t (\by) \, \rmd \by$ is symmetric,
\begin{align*}
\intdm{\bbR^d}{\Vint{\widetilde{\bbL}_t \bu, \bu}}{\bx}
	&= \intdm{\bbR^d}{\Vint{\widehat{\widetilde{\bbL}_t \bu}, \overline{\widehat{\bu}}}}{\bfxi} \\
	&= \intdm{\bbR^d}{ \Re \Vint{\widehat{\widetilde{\bbL}_t \bu}, \overline{\widehat{\bu}}}}{\bfxi} \\
	&= \intdm{\bbR^d}{\Vint{\widetilde{\bbM}_t(\bfxi) \widehat{\bu}(\bfxi), \widehat{\bu}(\bfxi) } }{\bfxi} \\
&\geq \intdm{\bbR^d}{|\widehat{\bu}(\bfxi)|^2 \, \min_{\bv \in \bbS^{d-1}} \bv^{\intercal} \widetilde{\bbM}_t(\bfxi) \bv}{\bfxi} \\
&\geq C \intdm{\bbR^d}{\big( 2 \pi |\bfxi| \big)^{2s} |\widehat{\bu}(\bfxi)|^2}{\bfxi}\,.
\end{align*}
It then follows that 
\begin{equation}\label{eq-L2AprioriEstimateInProof2-Ltu}
\intdm{\bbR^d}{\Vint{\widetilde{\bbL}_t \bu, \bu}}{\bx} \geq C \intdm{\bbR^d}{|\Ds \bu|^2}{\bx}\,.
\end{equation}
The result \eqref{eq-GeneralKernel-L2AprioriEstimateProof-II-Ltu} follows from the equality
\begin{align*}
\intdm{\bbR^d}{\left| \widetilde{\bbL}_t \bu + \lambda \bu \right|^2}{\bx} = \intdm{\bbR^d}{|\bff|^2}{\bx}
\end{align*}
and the estimates \eqref{eq-L2AprioriEstimateInProof1-Ltu} and \eqref{eq-L2AprioriEstimateInProof2-Ltu} above. Then \eqref{eq-GeneralKernel-L2AprioriEstimate-Ltu} follows from \eqref{eq-GeneralKernel-L2AprioriEstimateProof-I-Ltu} and \eqref{eq-GeneralKernel-L2AprioriEstimateProof-II-Ltu}.
\end{proof}

\begin{remark}
Constants $N_1$ and $N_2$ satisfying \eqref{eq-GeneralKernel-L2AprioriEstimate-Ltu} can be found explicitly. For example, the constant $N_2$ can be chosen to be
\begin{equation}
N_2 = \frac{1+ \alpha_1}{s} \, \sigma(\Gamma \cup - \Gamma) \, r^{-2s} \,.
\end{equation}
It is clear that $N_2 \to 0$ as $r \to \infty$.
\end{remark}


The proof of Theorem \ref{thm-GeneralKernel-L2Solvability} now follows. 
\begin{proof}[Proof of Theorem \ref{thm-GeneralKernel-L2Solvability}]
We will apply Proposition \ref{M-o-C}.  
Pick $\lambda_0 = N_1N_2$. Then for any $\lambda >\lambda_0$, take $T_0 = \alpha_1 \DotLap  + \lambda \mathbb{I}$, $T_{1} = \mathbb{L} + \lambda\mathbb{I}$, and therefore for $t\in [0, 1]$, $T_{t} = (1-t)T_0  + t T_{1} = \mathbb{L}_t + \lambda \mathbb{I}$, where $\mathbb{L}_t$ is as defined at the beginning of this subsection. Now the continuity and {\em a priori} estimates assumptions of Proposition \ref{M-o-C} are satisfied by Lemma \ref{thm-GeneralKernel-L2Continuity-Ltu} and Lemma \ref{lma:GeneralKernel:L2AprioriEstimate:Ltu}. As a consequence $ \mathbb{L} + \lambda\mathbb{I}$ is onto if and only if $\alpha_1 \DotLap  + \lambda \mathbb{I}$ is onto. The latter is proved to be the case in Theorem \ref{thm-FracLapKernel-LpSolvability} for $p=2$ or see \cite{Du-Zhou2011} .  
\end{proof}
%
%

\subsection{The integrable case.}

For integrable kernels, we begin with a special case by assuming that $\rho$ is radially symmetric.

\begin{theorem}\label{thm:Solvability:IntegrableKernel:Symmetric}
Suppose $\rho$ is a nonnegative, integrable and  radially symmetric function that is not identically 0. For every $\lambda > 0$ and for any $\bff \in \big[ L^2(\bbR^d) \big]^d$ there exists a unique strong solution $\bu \in \big[ L^2(\bbR^d) \big]^d$ to $\mathbb{L}{\bf u} + \lambda {\bf u} = {\bf f}$  satisfying the estimate
\begin{equation}\label{eq:L1SymmetricKernel:EnergyEstimate}
\Vnorm{\bu}_{L^2} \leq  \lambda^{-1} \Vnorm{\bff}_{L^2}\,.
\end{equation}
\end{theorem}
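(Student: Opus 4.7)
The plan is to proceed directly by Fourier analysis, since in Class A the kernel is integrable and the operator $\mathbb{L}$ is no longer singular. First I would observe that because $\rho \in L^1(\mathbb{R}^d)$ and the matrix $\shapetensorby$ has operator norm $1$ almost everywhere, the operator $\mathbb{L}$ is a bounded linear map from $\big[L^2(\mathbb{R}^d)\big]^d$ to itself (write $-\mathbb{L}\bu$ as a convolution-type operator minus $2\|\rho\|_{L^1}\bu$). Hence the Fourier symbol
\[
\mathbb{M}(\bfxi) = \int_{\mathbb{R}^d}\shapetensorby\bigl(1-e^{2\pi\iota\by\cdot\bfxi}\bigr)\rho(\by)\,\mathrm{d}\by
\]
is a well-defined bounded matrix-valued function on $\mathbb{R}^d$.

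Next I would exploit radial symmetry. Because $\rho$ is even, the imaginary part of the integrand, which is odd in $\by$, integrates to zero, so
\[
\mathbb{M}(\bfxi) = \int_{\mathbb{R}^d}\shapetensorby\bigl(1-\cos(2\pi\by\cdot\bfxi)\bigr)\rho(\by)\,\mathrm{d}\by,
\]
which is a real, symmetric, positive semidefinite matrix for every $\bfxi$: for any $\bv\in\mathbb{R}^d$,
\[
\bv^{\intercal}\mathbb{M}(\bfxi)\bv = \int_{\mathbb{R}^d}\bigl(1-\cos(2\pi\by\cdot\bfxi)\bigr)\Bigl(\bv\cdot\tfrac{\by}{|\by|}\Bigr)^2\rho(\by)\,\mathrm{d}\by \geq 0.
\]
Therefore $\mathbb{M}(\bfxi)+\lambda\mathbb{I}$ has all eigenvalues at least $\lambda>0$, is invertible, and the operator norm of its inverse is bounded by $\lambda^{-1}$ uniformly in $\bfxi$.

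With this in hand, existence and the estimate are immediate: define $\bu$ via $\widehat{\bu}(\bfxi) := (\mathbb{M}(\bfxi)+\lambda\mathbb{I})^{-1}\widehat{\bff}(\bfxi)$. The pointwise bound $|\widehat{\bu}(\bfxi)|\leq\lambda^{-1}|\widehat{\bff}(\bfxi)|$ combined with Plancherel's theorem gives $\bu\in\big[L^2(\mathbb{R}^d)\big]^d$ together with $\|\bu\|_{L^2}\leq\lambda^{-1}\|\bff\|_{L^2}$. Applying the Fourier transform to $\mathbb{L}\bu+\lambda\bu$ and using the continuity of $\mathbb{L}$ on $L^2$ confirms that the equation holds in $L^2$, hence pointwise almost everywhere. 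Uniqueness follows from the same argument applied to the homogeneous equation: if $(\mathbb{L}+\lambda\mathbb{I})\bu=0$ then $(\mathbb{M}(\bfxi)+\lambda\mathbb{I})\widehat{\bu}(\bfxi)=0$ for a.e.\ $\bfxi$, and invertibility of the symbol forces $\widehat{\bu}\equiv 0$.

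There is no real obstacle here, in contrast to the Class B case: integrability of $\rho$ removes the need to subtract a correction term, radial symmetry produces a real symmetric symbol, and nonnegativity of $\rho$ combined with the rank-one factor yields positive semidefiniteness automatically. The only point meriting care is the verification that $\mathbb{M}(\bfxi)+\lambda\mathbb{I}$ is pointwise invertible with inverse bounded by $\lambda^{-1}$ in operator norm, which is a one-line eigenvalue argument.
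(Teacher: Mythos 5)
Your proof is correct and follows the same overall strategy as the paper (Fourier transform, pointwise invertibility of the matrix symbol, Plancherel). Where you differ is in how the key fact — that $\big(\mathbb{M}(\bfxi)+\lambda\mathbb{I}\big)^{-1}$ is uniformly bounded by $\lambda^{-1}$ — is verified. The paper exploits the full radial symmetry $\rho(\by)=\rho(|\by|)$: after a rotation $R(\bfxi)$ aligning $\be_1$ with $\bfxi/|\bfxi|$ and a scaling $\bh = |\bfxi|R(\bfxi)\by$, it shows that $R\,\mathbb{M}(\bfxi)\,R^{\intercal}$ is diagonal with nonnegative entries $\ell_1(\bfxi),\ell_2(\bfxi),\ldots,\ell_d(\bfxi)$ (indeed $\ell_2=\cdots=\ell_d$), from which the explicit diagonalized inverse and the bound $\|(\mathbb{M}+\lambda\mathbb{I})^{-1}\|\le\lambda^{-1}$ drop out. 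You instead note that evenness alone makes $\mathbb{M}(\bfxi)$ real symmetric, and the quadratic form
\[
\bv^{\intercal}\mathbb{M}(\bfxi)\bv=\int_{\bbR^d}\big(1-\cos(2\pi\by\cdot\bfxi)\big)\Big(\bv\cdot\tfrac{\by}{|\by|}\Big)^2\rho(\by)\,\mathrm{d}\by\ge 0
\]
shows it is positive semidefinite; the eigenvalue bound is then immediate. This is shorter and uses a weaker hypothesis (evenness rather than radial symmetry), so it is a genuine, if modest, simplification; the paper's explicit diagonalization is arguably nicer when one wants the structure of the inverse symbol rather than merely a norm bound. Your uniqueness argument on the Fourier side is also valid and equivalent in substance to the paper's energy argument $\langle\mathbb{L}\bu,\bu\rangle_{L^2}\ge 0$.
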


\begin{proof}
To show existence, we prove that the Fourier matrix multiplier $\bbM(\bfxi) + \lambda \bbI$ is invertible, with inverse bounded in $L^{\infty}$.
Since $\rho$ is radially symmetric, the Fourier matrix associated to $\bbL$ is
\begin{equation*}
\bbM(\bfxi) = \intdm{\bbR^d}{\shapetensorby \big( 1-\cos(2 \pi \by \cdot \bfxi) \big) \rho(|\by|)}{\by}\,.
\end{equation*}
Let $R(\bfxi)$ be a rotation such that $\textstyle R(\bfxi)^{\intercal} \be_1 = \frac{\bfxi}{|\bfxi|}$. Then setting $\bh = |\bfxi| R(\bfxi)  \by$,
\begin{equation*}
\begin{split}
\bbM(\bfxi) &= \intdm{\bbR^d}{\frac{\rho \left( \frac{|\bh|}{|\bfxi|} \right)}{|\bfxi|^d} \frac{R(\bfxi)^{\intercal} \bh \otimes R(\bfxi)^{\intercal} \bh}{|\bh|^2} \big( 1-\cos(2 \pi h_1) \big)}{\bh} \\
	&= R(\bfxi)^{\intercal} \left( \intdm{\bbR^d}{\frac{\rho \left( \frac{|\bh|}{|\bfxi|} \right)}{|\bfxi|^d} \shapetensorbh \big( 1-\cos(2 \pi h_1) \big)}{\bh} \right) R(\bfxi)\,. \\
\end{split}
\end{equation*}
Using rotations, we see that the off-diagonal terms of $R \bbM R^{\intercal}$ are equal to $0$ for every $\bfxi \in \bbR^d$. Thus,
\begin{equation*}
\bbM(\bfxi) = R(\bfxi)^{\intercal} \diag(\ell_1(\bfxi), \ell_2(\bfxi), \ldots, \ell_d(\bfxi)) R(\bfxi)\,, \, 
\end{equation*}
where $\ell_i(\bfxi) = \intdm{\bbR^d}{\frac{\rho \left( \frac{|\bh|}{|\bfxi|} \right)}{|\bfxi|^d} \frac{h_i^2}{|\bh|^2} \big( 1-\cos(2 \pi h_1) \big)}{\bh}\,.$
Again using rotations, $\ell_2 = \ell_3 = \ldots = \ell_d$. Thus,
\begin{equation*}
\bbM(\bfxi) + \lambda \bbI = R(\bfxi)^{\intercal} \diag\big( \ell_1(\bfxi)+\lambda, \ell_2(\bfxi)+ \lambda, \ldots, \ell_2(\bfxi)+\lambda \big) R(\bfxi)\,.
\end{equation*}
Therefore,
\begin{equation*}
\big( \bbM(\bfxi) + \lambda \bbI \big)^{-1} = R(\bfxi)^{\intercal} \diag \left( \frac{1}{\ell_1(\bfxi)+\lambda}, \frac{1}{\ell_2(\bfxi)+ \lambda}, \ldots, \frac{1}{\ell_2(\bfxi)+\lambda} \right) R(\bfxi)\,.
\end{equation*}
Now, it is clear that $0 \leq \ell_i(\bfxi) \leq 2 \Vnorm{\rho}_{L^1(\bbR^d)}$ for almost every $\bfxi \in \bbR^d$. Thus, $\big( \bbM(\bfxi) + \lambda \bbI \big)^{-1} \in \big[ L^{\infty}(\bbR^d) \big]^{d \times d}$, so the linear operator $T : \big[ L^2(\bbR^d) \big]^d \to \big[ L^2(\bbR^d) \big]^d$ defined by $T \bff := \left( \big( \bbM(\bfxi) + \lambda \bbI \big)^{-1}  \widehat{\bff} \right)^{\vee}$ is bounded, with $\Vnorm{T \bff}_{L^2} \leq \lambda^{-1}\Vnorm{\bff}_{L^2}$. Further, $\bu := T\bff$ solves the equation \eqref{eq-GeneralKernel-Equation}.

For uniqueness, we show that any solution $\bu\in L^{2}$ of \eqref{eq-GeneralKernel-Equation} with data $\bff \equiv {\bf 0}$ is identically ${\bf 0}$. This follows from the fact that if $\bbL \bu + \lambda \bu = 0$, then  $\Vint{\bbL \bu + \lambda \bu, \bu}_{L^2(\bbR^d)} = 0$ and as proved earlier $\Vint{\bbL \bu, \bu}_{L^2(\bbR^d)}\geq 0$. 
\end{proof}

\begin{lemma}[Continuity of the operator: integrable case]\label{thm:GeneralIntegrableKernel:L2Continuity}
Suppose that $\rho$ is in {\bf Class A}.  Let $1 \leq p < \infty$. The operator $\bbL : \big[ L^p(\bbR^d) \big]^d \to \big[ L^p(\bbR^d) \big]^d$ is continuous. More precisely, we have for every $\bu \in \big[ L^p(\bbR^d) \big]^d$ the estimate
\begin{equation}\label{eq:IntegrableKernel:LpContinuity}
\Vnorm{\bbL \bu}_{L^p} \leq C \Vnorm{\bu}_{L^p}\,,
\end{equation}
where $C = C(\rho) > 0$.
\end{lemma}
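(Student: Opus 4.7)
The plan is to reduce $\bbL$ to the combination of a matrix-valued convolution operator and a pointwise matrix multiplication, so that the bound follows from Young's convolution inequality. Since $\rho$ is in Class A, the function $\chi^{(s)}$ is taken to be identically zero in the definition of $\bbL$, so
$$
-\bbL \bu(\bx) = \intdm{\bbR^d}{\shapetensorby \, \bu(\bx + \by) \rho(\by)}{\by} - \Big( \intdm{\bbR^d}{\shapetensorby \, \rho(\by)}{\by} \Big) \bu(\bx) =: J_1(\bx) - M \bu(\bx).
$$
Both integrals converge absolutely because the rank-one matrix $\shapetensorby$ has operator norm equal to $1$ for every $\by \neq \mathbf{0}$, and $\rho \in L^1(\bbR^d)$.

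First I would handle the multiplication term. The constant matrix $M$ satisfies $|M|_{\mathrm{op}} \leq \intdm{\bbR^d}{|\shapetensorby|_{\mathrm{op}} \rho(\by)}{\by} = \Vnorm{\rho}_{L^1}$, so the pointwise bound $|M \bu(\bx)| \leq \Vnorm{\rho}_{L^1} |\bu(\bx)|$ immediately gives $\Vnorm{M \bu}_{L^p} \leq \Vnorm{\rho}_{L^1} \Vnorm{\bu}_{L^p}$.

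Next I would control $J_1$ by recognizing it as a convolution after reflecting the integration variable. The estimate $|\shapetensorby|_{\mathrm{op}} \leq 1$ together with the substitution $\bz = -\by$ yields
$$
|J_1(\bx)| \leq \intdm{\bbR^d}{|\bu(\bx+\by)| \rho(\by)}{\by} = \intdm{\bbR^d}{|\bu(\bx-\bz)| \widetilde{\rho}(\bz)}{\bz} = \big( \widetilde{\rho} \ast |\bu| \big)(\bx),
$$
where $\widetilde{\rho}(\bz) := \rho(-\bz)$ has the same $L^1$ norm as $\rho$. Young's convolution inequality applied componentwise then gives $\Vnorm{J_1}_{L^p} \leq \Vnorm{\rho}_{L^1} \Vnorm{\bu}_{L^p}$, and combining the two estimates with the triangle inequality produces the desired bound with $C = 2 \Vnorm{\rho}_{L^1}$.

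There is no genuine obstacle to anticipate; the argument is a direct application of Young's inequality. The only point worth checking carefully is that the rank-one matrix factor $\shapetensorby$ introduces no additional size, as its operator norm equals one uniformly in $\by$ away from the origin, and that absolute integrability of the two pieces $J_1$ and $M \bu$ holds separately without relying on any cancellation between them.
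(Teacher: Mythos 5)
Your proof is correct and follows essentially the same route as the paper's: split $\bbL\bu$ into a constant-matrix multiplication $\bbA\bu$ and a convolution with the matrix kernel $\rho(\by)\shapetensorby$, bound the former pointwise using $|\shapetensorby|_{\mathrm{op}}=1$, and bound the latter by Young's inequality, obtaining $C = 2\Vnorm{\rho}_{L^1}$. Your remark that the convolution kernel is $\rho(-\cdot)$ after the change of variables (and hence still has the same $L^1$ norm) is a small point the paper elides, and you are right to keep track of it since $\rho$ in Class A need not be symmetric.
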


\begin{proof}
Since $\rho \in L^1(\bbR^d)$,
\begin{equation*}
\begin{split}
\bbL \bu(\bx) &= \left( \intdm{\bbR^d}{\rho(\by) \shapetensorby}{\by} \right) \bu(\bx) - \intdm{\bbR^d}{\rho(\by) \shapetensorby \bu(\bx-\by)}{\by} \\
	&= \bbA \bu(\bx) + \bbK \ast \bu(\bx)\,,
\end{split}
\end{equation*}
where
\begin{equation*}
\bbA := \intdm{\bbR^d}{\rho(\by) \shapetensorby}{\by}\,, \qquad \bbK(\by) := \rho(\by) \shapetensorby\,.
\end{equation*}
Using the bound $|\bbA| \leq \Vnorm{\rho}_{L^1}$ and Young's inequality for integrals,
\begin{equation*}
\Vnorm{\bbL \bu}_{L^p} \leq \Vnorm{\bbA \bu}_{L^p} + \Vnorm{\bbK \ast \bu}_{L^p} \leq  2 \Vnorm{\rho}_{L^1} \Vnorm{\bu}_{L^p}\,.
\end{equation*}
\end{proof}

\begin{lemma}[\textit{A priori} estimate: integrable case] \label{thm-GeneralKernel-L2APrioriEstimate}
Suppose that $\rho$ is in {\bf Class A}.  Let $\lambda > 0$ be a constant and let $\bff \in \big[ L^2(\bbR^d) \big]^d$. Suppose $\bu \in \big[L^2(\bbR^d) \big]^d$ satisfies the equation \eqref{eq-GeneralKernel-Equation} in $\bbR^d$. Then there exists a constant $C =  C(\lambda) > 0$ such that
\begin{equation}\label{eq:AsymmetricL1Kernel:L2AprioriEstimate}
\Vnorm{\bu}_{L^2} \leq \lambda^{-1} \Vnorm{\bff}_{L^2}\,.
\end{equation}
\end{lemma}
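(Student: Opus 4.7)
The plan is an elementary $L^2$ energy argument based on the nonnegativity of $\bbL$ as a quadratic form on $\big[ L^2(\bbR^d) \big]^d$. Since $\bu \in \big[ L^2(\bbR^d) \big]^d$, Lemma \ref{thm:GeneralIntegrableKernel:L2Continuity} guarantees that $\bbL \bu \in \big[ L^2(\bbR^d) \big]^d$, so I may legitimately pair \eqref{eq-GeneralKernel-Equation} with $\bu$ in $L^2$, yielding
$$\Vint{\bbL \bu, \bu}_{L^2} + \lambda \Vnorm{\bu}_{L^2}^2 = \Vint{\bff, \bu}_{L^2}.$$
Once the nonnegativity $\Vint{\bbL \bu, \bu}_{L^2} \geq 0$ is established, the Cauchy--Schwarz inequality applied to the right-hand side yields $\lambda \Vnorm{\bu}_{L^2}^2 \leq \Vnorm{\bff}_{L^2} \Vnorm{\bu}_{L^2}$, which is precisely \eqref{eq:AsymmetricL1Kernel:L2AprioriEstimate} with $C = \lambda^{-1}$.

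The only substantive step is the nonnegativity. Using $\chi^{(s)} \equiv 0$ for Class A and Fubini's theorem (justified since $\bu \in \big[ L^2 \big]^d$ and $\rho \in L^1$ together with the bound $|\shapetensorby| \leq 1$ make everything absolutely convergent), the pairing expands as
$$-\Vint{\bbL \bu, \bu}_{L^2} = \intdm{\bbR^d}{\rho(\by) \intdm{\bbR^d}{ \alpha(\bx,\by) \big( \beta(\bx,\by) - \alpha(\bx,\by) \big)}{\bx}}{\by},$$
where I set $\alpha(\bx, \by) := \frac{\by}{|\by|} \cdot \bu(\bx)$ and $\beta(\bx,\by) := \frac{\by}{|\by|} \cdot \bu(\bx+\by)$. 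For each fixed $\by$, translation invariance of Lebesgue measure in $\bx$ gives $\Vnorm{\beta(\cdot,\by)}_{L^2_{\bx}} = \Vnorm{\alpha(\cdot,\by)}_{L^2_{\bx}}$, and the scalar Cauchy--Schwarz inequality then yields $\intdm{\bbR^d}{\alpha \beta}{\bx} \leq \intdm{\bbR^d}{\alpha^2}{\bx}$. Hence the inner integral is nonpositive; multiplying by $\rho(\by) \geq 0$ and integrating in $\by$ shows $-\Vint{\bbL \bu, \bu}_{L^2} \leq 0$, so $\Vint{\bbL \bu, \bu}_{L^2} \geq 0$.

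I do not anticipate any real obstacle. The argument requires no symmetry of $\rho$, and it mirrors the uniqueness portion of Theorem \ref{thm:Solvability:IntegrableKernel:Symmetric}, which already cites $\Vint{\bbL \bu, \bu}_{L^2} \geq 0$ as a ``proved earlier'' fact. An equivalent route would be to apply Plancherel's theorem and observe that the Hermitian part of the Fourier symbol $\bbM(\bfxi) = \intdm{\bbR^d}{\shapetensorby (1 - e^{2\pi \iota \by \cdot \bfxi}) \rho(\by)}{\by}$ is the positive semidefinite matrix $\intdm{\bbR^d}{\shapetensorby (1 - \cos(2\pi \by \cdot \bfxi)) \rho(\by)}{\by}$, but the physical-space calculation above is arguably cleaner.
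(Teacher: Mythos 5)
Your proposal is correct and reaches the same conclusion, but by a genuinely different route on both of the steps that carry content. Where the paper expands $\Vnorm{\bff}_{L^2}^2 = \Vnorm{\bbL\bu + \lambda\bu}_{L^2}^2 = \Vnorm{\bbL\bu}_{L^2}^2 + 2\lambda\Vint{\bbL\bu,\bu}_{L^2} + \lambda^2\Vnorm{\bu}_{L^2}^2$ and then discards the two nonnegative terms, you pair the equation with $\bu$ and apply Cauchy--Schwarz on $\Vint{\bff,\bu}_{L^2}$; both yield $\lambda\Vnorm{\bu}_{L^2} \leq \Vnorm{\bff}_{L^2}$. More substantively, where the paper establishes $\Vint{\bbL\bu,\bu}_{L^2} \geq 0$ via Plancherel, reducing to the positive semidefiniteness of $\Re\,\bbM(\bfxi) = \intdm{\bbR^d}{\shapetensorby(1-\cos(2\pi\by\cdot\bfxi))\rho(\by)}{\by}$, you prove the same fact entirely in physical space with Fubini, Cauchy--Schwarz, and translation invariance of Lebesgue measure. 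Your physical-space proof of nonnegativity is more elementary and arguably more transparent; as a bonus, it bypasses the paper's density reduction to $C_c^\infty$ since you verify absolute convergence directly. The paper's Fourier-side calculation has the advantage of simultaneously identifying the multiplier structure used in the neighboring existence proof (Theorem~\ref{thm:Solvability:IntegrableKernel:Symmetric}), so nothing new needs to be set up. Both arguments are sound, and neither uses any symmetry of $\rho$, as you note.
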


\begin{proof}
Since $\big[ C^{\infty}_c(\bbR^d) \big]^d$ is dense in $\big[ L^2(\bbR^d) \big]^d$, by Lemma \ref{thm:GeneralIntegrableKernel:L2Continuity} it suffices to show \eqref{eq:AsymmetricL1Kernel:L2AprioriEstimate} for $\bu \in \big[ C^{\infty}_c(\bbR^d) \big]^d$.
Since
{\small \begin{align}\label{eq:SymmetricL1Kernel:L2AprioriEstimate:Proof1}
\intdm{\bbR^d}{|\bff|^2}{\bx} = \intdm{\bbR^d}{\left| \bbL \bu + \lambda \bu \right|^2}{\bx} = \intdm{\bbR^d}{\left| \bbL \bu \right|^2}{\bx} + 2 \lambda \intdm{\bbR^d}{\Vint{ \bbL \bu, \bu}}{\bx} + \lambda^2 \intdm{\bbR^d}{\left| \bu \right|^2}{\bx}\,,
\end{align}}
it suffices to show only that
\begin{equation}\label{eq:SymmetricL1Kernel:L2AprioriEstimate:Proof2}
\intdm{\bbR^d}{\Vint{ \bbL \bu, \bu}}{\bx} \geq 0\,.
\end{equation}
Then \eqref{eq:AsymmetricL1Kernel:L2AprioriEstimate} follows by dropping the first two terms on the right-hand side of \eqref{eq:SymmetricL1Kernel:L2AprioriEstimate:Proof1}.
To prove \eqref{eq:SymmetricL1Kernel:L2AprioriEstimate:Proof2}, note that
\begin{align*}
\intdm{\bbR^d}{\Vint{\bbL \bu, \bu}}{\bx} 
	= \intdm{\bbR^d}{\Vint{\widehat{\bbL \bu},\overline{\widehat{\bu}}}}{\bfxi} 
	&= \intdm{\bbR^d}{\Vint{\bbM(\bfxi) \widehat{\bu}(\bfxi), \overline{ \widehat{\bu}(\bfxi) } } }{\bfxi}\\
	&= \intdm{\bbR^d}{|\widehat{\bu}(\bfxi)|^2 \Vint{\bbM(\bfxi) \frac{\widehat{\bu}(\bfxi)}{|\widehat{\bu}(\bfxi)|} , \frac{\overline{\widehat{\bu}(\bfxi)}}{|\widehat{\bu}(\bfxi)|}} }{\bfxi}\,.
\end{align*}
Notice that $\Vint{\bbM(\bfxi)}$ is in general a matrix with complex entries. However, for every $\bv \in \bbC^d$ with $|\bv|=1$, a simple computation shows that 
\begin{equation*}
\Vint{\bbM(\bfxi)\bv,\overline{\bv}} = \intdm{\bbR^d}{\rho(\by) \big( 1 - \e^{2 \pi \imath \by \cdot \bfxi} \big) \left( \frac{(\Re \bv \cdot \by)^2 + (\Im \bv \cdot \by)^2}{|\by|^2} \right)}{\by}\,.
\end{equation*}
Since $\textstyle \intdm{\bbR^d}{\Vint{\bbL \bu, \bu}}{\bx}$ is real-valued, it follows that
\begin{align*}
\intdm{\bbR^d}{\Vint{\bbL \bu, \bu}}{\bx}  &= \intdm{\bbR^d}{|\widehat{\bu}(\bfxi)|^2 \Vint{\bbM(\bfxi) \frac{\widehat{\bu}(\bfxi)}{|\widehat{\bu}(\bfxi)|} , \frac{\overline{\widehat{\bu}(\bfxi)}}{|\widehat{\bu}(\bfxi)|}} }{\bfxi}\\
& = \intdm{\bbR^d}{|\widehat{\bu}(\bfxi)|^2 \Vint{\widetilde{\bbM}(\bfxi) \frac{\widehat{\bu}(\bfxi)}{|\widehat{\bu}(\bfxi)|} , \frac{\overline{\widehat{\bu}(\bfxi)}}{|\widehat{\bu}(\bfxi)|}} }{\bfxi}\,,
\end{align*}
where
\begin{equation*}
\widetilde{\bbM}(\bfxi) := \Re \bbM(\bfxi) = \intdm{\bbR^d}{\shapetensorby \big( 1-\cos(2 \pi \by \cdot \bfxi) \big) \rho(\by)}{\by}\,.
\end{equation*}
Clearly, $\Vint{ \widetilde{\bbM}(\bfxi) \bv, \overline{\bv} } \geq 0$ for all $|\bv| = 1$, and so
\begin{equation*}
\intdm{\bbR^d}{\Vint{\bbL \bu, \bu}}{\bx}  = \intdm{\bbR^d}{|\widehat{\bu}(\bfxi)|^2 \Vint{\widetilde{\bbM}(\bfxi) \frac{\widehat{\bu}(\bfxi)}{|\widehat{\bu}(\bfxi)|} , \frac{\overline{\widehat{\bu}(\bfxi)}}{|\widehat{\bu}(\bfxi)|}} }{\bfxi} \geq 0\,,
\end{equation*}
which is \eqref{eq:SymmetricL1Kernel:L2AprioriEstimate:Proof2}.
\end{proof}

The proof of Theorem \ref{thm:GeneralL1KernelAsymmetric:L2Solvability} for general $\rho$ in \textbf{Class A} now follows by applying the method of continuity, treating the operator $\bbL_t \bu := t (\bbL_0 \bu + \lambda \bu) + (1-t)( \bbL \bu + \lambda \bu)$. Here, $\bbL \bu $ denotes the operator with a general kernel $\rho$ belonging to \textbf{Class A}, and $\bbL_0 \bu$ denotes an operator with a kernel $\rho_0$ of the type considered in Theorem \ref{thm:Solvability:IntegrableKernel:Symmetric}, i.e. belonging to \textbf{Class A} and radially symmetric.
Thus the kernel of the operator $\bbL_t$ has the expression $t \rho(\by) + (1-t) \rho_0(|\by|)$, and satisfies the criteria of \textbf{Class A} with bounds independent of $t$. Therefore the \textit{a priori} estimates of Theorem \ref{thm-GeneralKernel-L2APrioriEstimate} can be leveraged for the operator $\bbL_t + \lambda$ in the method of continuity program, and the results of Theorem \ref{thm:GeneralL1KernelAsymmetric:L2Solvability} follow.

\section{$L^{p}$ solvability}
%
%
%
%
We use traditional Fourier multiplier techniques. The analogous result for the fractional Laplacian in the scalar case seems to be known, but we are unable to find a proof in the literature. We provide a proof of Theorem \ref{thm-FracLapKernel-LpSolvability} using the Marcinkiewicz multiplier theorem \cite{grafakos2008classical}.

Let $s \in (0,1)$. Consider the operator
\begin{equation*}
-\DotLap^s \bu(\bx) := \intdm{\bbR^d}{\frac{1}{|\by|^{d+2s}} \shapetensorby (\bu(\bx+\by)-\bu(\bx))}{\by}\,.
\end{equation*}
In the following calculation, we assume that $\bu \in \big[ \cS(\bbR^d) \big]^d$. 
Using the Fourier transform we can write $-\widehat{\DotLap^s \bu}$ as follows:
\begin{equation*}
-\widehat{\DotLap^s \bu}(\bfxi) = \left( \intdm{\bbR^d}{\frac{e^{2\pi \iota \bh \cdot \bfxi} - 1}{|\bh|^{d+2s}}  \shapetensorbh }{\bh} \right) \widehat{\bu}(\bfxi) := - \bbM^{\Delta}_s(\bfxi) \widehat{\bu}(\bfxi)\,.
\end{equation*}
From the proof of \cite[Theorem 4.2]{Mengesha-HalfSpace},
\begin{equation*}
\bbM^{\Delta}_s(\bfxi) = \big( 2 \pi |\bfxi| \big)^{2s} \left( \ell_1 \bbI + \ell_2 \shapetensorbfxi \right)\,,
\end{equation*}
where $\ell_1$ and $\ell_2$ are two positive constants depending only on $d$ and $s$.

Theorem \ref{thm-FracLapKernel-LpSolvability} hinges on the following lemma concerning Fourier multipliers.

\begin{lemma}\label{lma-LpContinuity}
For $s \in (0,1)$, $1 < p < \infty$ and $\lambda > 0$. Define the matrix of symbols
$$
\mathfrak{M}(\bfxi) := \big(1+4 \pi^2 |\bfxi|^2 \big)^{-s}\left( \bbM^{\Delta}_s(\bfxi) + \lambda \bbI \right). 
$$
Then both $\mathfrak{M}(\bfxi) $ and $\mathfrak{M}^{-1}(\bfxi)$  $L^p$-multipliers; that is, there exists a constant $C = C(d,s,p,\lambda)$ such that for every $\bu \in \big[ \cS(\bbR^d) \big]^d$
\begin{equation}
\Vnorm{\big( \frak{M} \widehat{\bu} \big)^{\vee} }_{L^p} \leq C \Vnorm{\bu}_{L^p}\,,\quad\text{and},\,\Vnorm{\big( \frak{M}^{-1} \widehat{\bu} \big)^{\vee} }_{L^p} \leq C \Vnorm{\bu}_{L^p}\,.
\end{equation}
\end{lemma}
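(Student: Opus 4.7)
The plan is to reduce the matrix multiplier question to scalar Fourier multipliers amenable to Marcinkiewicz. Introduce the shorthand $A(\bfxi) := \ell_1(2\pi|\bfxi|)^{2s}+\lambda$ and $B(\bfxi) := \ell_2(2\pi|\bfxi|)^{2s}$, so that $\bbM^{\Delta}_s(\bfxi)+\lambda\bbI = A\bbI + B\,\shapetensorbfxi$. Since $\shapetensorbfxi$ is a rank-one orthogonal projection $P$ with $P^2=P$, an elementary computation gives
\begin{equation*}
(A\bbI + BP)^{-1} = \frac{1}{A}\bbI - \frac{B}{A(A+B)}P\,,
\end{equation*}
which is well-defined everywhere because $A\geq\lambda>0$. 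Consequently, both $\mathfrak{M}$ and $\mathfrak{M}^{-1}$ take the form $\phi(|\bfxi|)\,\bbI + \psi(|\bfxi|)\,\shapetensorbfxi$ for certain bounded scalar functions $\phi,\psi$ of $|\bfxi|$ alone; each such scalar factor is a bounded rational combination of $|\bfxi|^{2s}$ and $(1+4\pi^2|\bfxi|^2)^s$ in which numerator and denominator have matching growth at infinity and at zero.

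Each matrix entry of $\mathfrak{M}$ and $\mathfrak{M}^{-1}$ is therefore a sum of a radial multiplier $\phi(|\bfxi|)\delta_{ij}$ and a radial multiplier times the Riesz-type factor $\xi_i\xi_j/|\bfxi|^2$. The Riesz factor is zero-order homogeneous and smooth on $\bbR^d\setminus\{0\}$, so it is a classical Mikhlin (hence Marcinkiewicz) multiplier. For the radial factors I would invoke Fa\`a di Bruno: for $g\in C^\infty((0,\infty))$,
\begin{equation*}
|\partial^\alpha g(|\bfxi|)| \,\leq\, C\sum_{k=1}^{|\alpha|}|g^{(k)}(|\bfxi|)|\,|\bfxi|^{k-|\alpha|}\,,
\end{equation*}
so the pointwise Mikhlin bound $|\bfxi|^{|\alpha|}|\partial^\alpha g(|\bfxi|)|\leq C$ reduces to the one-dimensional scaling estimate $\sup_{t>0}t^k|g^{(k)}(t)|<\infty$ for every $k\geq 0$. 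This estimate I would verify by induction on $k$ for each scalar building block appearing in $\phi$ and $\psi$, using that the operator $t\frac{d}{dt}$ maps the class of bounded rational combinations of $t^{2s}$ and $(1+t^2)^s$ into itself. Leibniz then yields the Mikhlin bound on each matrix entry, and in particular the weaker Marcinkiewicz condition $|\xi^\alpha\partial^\alpha m(\bfxi)|\leq C$ for multi-indices with $\alpha_i\in\{0,1\}$.

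With the Marcinkiewicz condition verified entrywise, the Marcinkiewicz multiplier theorem provides the stated $L^p$ bound on each component of $\big(\mathfrak{M}\widehat{\bu}\big)^\vee$ and $\big(\mathfrak{M}^{-1}\widehat{\bu}\big)^\vee$, and summing over components delivers the claimed vector-valued inequalities. The principal obstacle is the bookkeeping in the scalar scaling bound near the origin: $t^{2s}$ is not smooth at $t=0$ for non-integer $s$, so one must verify by hand that the rational combinations in $\phi$ and $\psi$ remain $O(1)$ as $t\to 0^+$ and that their $k$-th derivatives grow no faster than $O(t^{-k})$. Once this elementary, if tedious, bookkeeping is complete, the rest of the argument is routine Fourier analysis.
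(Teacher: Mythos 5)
Your proposal is correct but follows a genuinely different route from the paper's proof. You reduce both $\mathfrak{M}$ and $\mathfrak{M}^{-1}$ to the common form $\phi(|\bfxi|)\bbI + \psi(|\bfxi|)\frac{\bfxi\otimes\bfxi}{|\bfxi|^2}$, observe that the Riesz factor is a standard H\"ormander--Mikhlin multiplier, and verify a H\"ormander--Mikhlin bound for the radial scalar factors $\phi,\psi$ via Fa\`a di Bruno and the one-dimensional scaling estimate $\sup_{t>0} t^k|g^{(k)}(t)|<\infty$. Since H\"ormander--Mikhlin implies the (weaker) pointwise Marcinkiewicz condition, this is a single unified argument. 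The paper, by contrast, uses three distinct devices: it handles $\mathfrak{M}$ itself by factoring it into known multipliers (Riesz transforms, the Bessel potential of order $-2s$, and the finite measure whose transform is $\big(\tfrac{4\pi^2|\bfxi|^2}{1+4\pi^2|\bfxi|^2}\big)^s$); it handles $m_1$ in the inverse via an auxiliary function $F$ on $\bbR^{d+2}$ with a clever weighted-dilation invariance; and it handles $m_2$ with a separate auxiliary one-variable function $G_a$ and direct derivative checks. Your approach is cleaner and more systematic, at the cost that the decisive scaling bound is only sketched; the paper's weighted-dilation trick for $m_1$, while more baroque, makes the near-origin behavior manifest without induction. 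One small imprecision in your sketch: the class of ``bounded rational combinations of $t^{2s}$ and $(1+t^2)^s$'' is not literally closed under $t\tfrac{d}{dt}$, since $t\tfrac{d}{dt}(1+t^2)^s = 2s(1+t^2)^s - 2s(1+t^2)^{s-1}$ introduces the new building block $(1+t^2)^{s-1}$; you need to enlarge the class to rational combinations involving $(1+t^2)^{s-k}$ for all $k\geq 0$ (which remains bounded and is closed), but once stated correctly the induction goes through as you describe. Your formula $(A\bbI+BP)^{-1} = A^{-1}\bbI - \tfrac{B}{A(A+B)}P$ for the rank-one projection $P=\frac{\bfxi\otimes\bfxi}{|\bfxi|^2}$ is correct and matches the paper's explicit inverse.
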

We postpone the proof of the lemma for the moment, and prove Theorem \ref{thm-FracLapKernel-LpSolvability}.

\begin{proof}[Proof of Theorem \ref{thm-FracLapKernel-LpSolvability}]
First we show that the \textit{a priori} estimate \eqref{eq-FracLapKernel-LpAPrioriEstimate}
holds for every $\bu \in \big[ H^{2s,p}(\bbR^d) \big]^d$ solving \eqref{eq-FracLapKernel-Equation}.
By Lemma \ref{lma-LpContinuity} the operator $\DotLap^s + \lambda : \big[ H^{2s,p}(\bbR^d) \big]^d \to \big[ L^p(\bbR^d) \big]^d$ is continuous. Therefore, we need only show that \eqref{eq-FracLapKernel-LpAPrioriEstimate} holds for $\bu \in \big[ \cS(\bbR^d) \big]^d$. Then by definition and by Lemma \ref{lma-LpContinuity} we have that 
\begin{align*}
\Vnorm{\bu}_{H^{2s,p}(\bbR^d)} &= \Vnorm{\Big(1+4 \pi^s |\bfxi|^2 \big)^{s} \widehat{\bu} \Big)^{\vee}}_{L^p(\bbR^d)} \\
&= \Vnorm{\left( \frak{M}^{-1} \big( \bbM^{\Delta}_s + \lambda \bbI \big) \widehat{\bu} \right)^{\vee}}_{L^p(\bbR^d)} \\
&\leq C \Vnorm{\Big( \big( \bbM^{\Delta}_s + \lambda \bbI \big) \widehat{\bu} \Big)^{\vee}}_{L^p(\bbR^d)} = C \Vnorm{(-\mathring{\Delta})^s  \bu + \lambda \bu}_{L^p(\bbR^d)} = C \Vnorm{\bff}_{L^p(\bbR^d)}\,,
\end{align*}
which is \eqref{eq-FracLapKernel-LpAPrioriEstimate}. Uniqueness clearly follows from the a priori estimate as well. 

To see that a solution to \eqref{eq-FracLapKernel-Equation} exists, simply note that for every $\bff \in \big[ L^p(\bbR^d) \big]^d$ the distribution
\begin{equation*}
\bu(\bx) = \left( \left( \bbM^{\Delta}_s + \lambda \bbI \right)^{-1} \widehat{\bff} \right)^{\vee}(\bx)
\end{equation*}
belongs to $\big[ \cS'(\bbR^d) \big]^d$, and that
\begin{equation}
\left( (1 + 4 \pi^2 |\bfxi|^2)^s \widehat{\bu} \right)^{\vee} = \left( \mathfrak{M}^{-1} \widehat{\bff} \right)^{\vee} \in \big[ L^p(\bbR^d) \big]^d
\end{equation}
by Lemma \ref{lma-LpContinuity}. Thus $\bu$ is a function in $\big[ H^{2s,p}(\bbR^d) \big]^d$ that solves \eqref{eq-FracLapKernel-Equation}. The proof is complete.
\end{proof}


We use the Marcinkiewicz multiplier theorem to prove the lemma; see \cite{grafakos2008classical}.

\begin{proof}[Proof of Lemma \ref{lma-LpContinuity}]
We begin by showing that $\frak{M}(\bfxi)$ is an $L^{p}$ multiplier. 
By a direct computation, the function $\frak{M}(\bfxi)$ has the explicit expression
$$
\frak{M}(\bfxi) = \left(\frac{4 \pi^2 |\bfxi|^2}{1+ 4 \pi^2 |\bfxi|^2} \right)^s \left( \ell_1 \bbI + \ell_2 \shapetensorbfxi \right) + \frac{\lambda}{ \big( 1 + 4 \pi^2 |\bfxi|^2 \big)^s }\,.
$$
The symbol $\frac{\bfxi \otimes \bfxi}{|\bfxi|^2}$ is a matrix symbol whose $ij$-th entry is the symbol of the composition of Riesz transforms $-R_i R_j$. Thus the matrix symbol is an $L^p$-multiplier. The second expression in the sum defining $\frak{M}$ is the Bessel potential of order $-2s$, and is also an $L^p$-multiplier; see \cite[Chapter 5, Section 3.3]{stein}. Finally, it is established in \cite[Chapter V, Section 3.2., Lemma 2]{stein} that the symbol $\left( \frac{4 \pi^2 |\bfxi|^2}{1+ 4 \pi^2 |\bfxi|^2} \right)^s$ is a finite measure. Putting all this together gives us the result.

Next, we show that $\frak{M}^{-1}(\bfxi)$ an $L^p$ multiplier. 
Again by direct computation, the inverse $\frak{M}^{-1}(\bfxi)$ has the expression
\begin{equation*}
\frak{M}^{-1}(\bfxi) = \frac{\big( 1 + 4 \pi^2 |\bfxi|^2 \big)^s}{\ell_1 \big( 4 \pi^2 |\bfxi|^2  \big)^s + \lambda} \left( \bbI - \frac{\ell_2 \big( 4 \pi^2 |\bfxi|^2 \big)^s}{\big( 4 \pi^2 |\bfxi|^2 \big)^s (\ell_1 + \ell_2) + \lambda} \shapetensorbfxi \right)\,.
\end{equation*}
It suffices to show that the symbols
\begin{equation*}
m_1(\bfxi) := \frac{\big( 1 + 4 \pi^2 |\bfxi|^2 \big)^s}{\ell_1 \big( 4 \pi^2 |\bfxi|^2  \big)^s + \lambda}\,, \qquad m_2(\bfxi) :=  \frac{\ell_2 \big( 4 \pi^2 |\bfxi|^2 \big)^s}{\big( 4 \pi^2 |\bfxi|^2 \big)^s (\ell_1 + \ell_2) + \lambda}
\end{equation*}
are $L^p$-multipliers. Using this, we then observe that $\frak{M}^{-1}$ is a matrix whose entries consist of sums and products of $L^p$-multipliers, and the lemma is proved.

We use the Marcinkiewicz multiplier theorem; specifically, we show that $m_1$ and $m_2$ verify condition (6.2.9) in \cite[Corollary 6.2.5]{grafakos2008classical}. We introduce the auxiliary function $F : \bbR^{d+2} \to \bbR$ defined by
$$
F(\bfeta) := \frac{\big( |\eta_1|^2 + |\bfeta'|^2 \big)^s}{|\bfeta'|^{2s} + |\eta_2|^2}\,, \quad \bfeta = (\eta_1, \eta_2, \ldots, \eta_{d+1}, \eta_{d+2})\,, \quad \bfeta' := (\eta_3, \eta_4, \ldots, \eta_{d+1}, \eta_{d+2})\,.
$$
By inspection it is clear that $F$ is bounded on $\bbR^{d+2}$ and belongs to the class $C^{d+2}$ away from the coordinate axes on $\bbR^{d+2}$. Also by inspection, for any $\eta_1 \neq 0$ and $\eta_2 \neq 0$ the function $F(\eta_1, \eta_2, \cdot)$ is bounded on $\bbR^d$ and belongs to the class $C^d$ on $\bbR^d \setminus \{ \boldsymbol{0} \}$. 
The function $F$ is invariant under weighted dilation: for any $\mu > 0$, and $s>0$
\[
F(\mu\eta_1, \mu^{s}\eta_2, \mu\bfeta')= F(\eta_1, \eta_2, \bfeta'). 
\]
As a consequence, for any multi-index $\beta = (\beta_1, \beta_2, \ldots, \beta_{d+1}, \beta_{d+2})$ differentiation yields
\begin{equation}\label{weighted-dilation-derivative}
\mu^{|\beta|+(s-1)\beta_2} (\p_{\beta} F)(\mu \eta_1, \mu^{s} \eta_2, \mu \bfeta') = \p_{\beta} F(\eta_1, \eta_2, \bfeta')\,.
\end{equation}
Now for fixed $\eta_1 \neq 0$ and $\eta_2 \neq 0$ and every vector of the form $\bfeta = (\eta_1, \eta_2, \bfeta')\in \mathbb{R}^{d}$, we will choose $\mu=\mu_{\bfeta}$ such that $(\mu \eta_1, \mu^{s} \eta_2, \mu \bfeta')$ has unit length in $\mathbb{R}^{d +2}$. Such a $\mu> 0$  satisfies the equation
\[
\mu^{2} \eta_{1}^{2} + \mu^{2s}\eta_{2}^{2} + \mu^{2} |\bfeta'|^{2} = 1, 
\] and its existence can be shown as an intersection point of the quadratic function  $\mu\mapsto \mu^{2}(\eta_{1}^{2} +  |\bfeta'|^{2})$ and the curve $\mu \mapsto 1-\mu^{2s}\eta_{2}^{2}$.  Note that as $|\bfeta'| \to 0$, $\mu_{\bfeta}\to \mu_{0} > 0$, the intersection of the quadratic function  $\mu\mapsto \mu^{2}\eta_{1}^{2}$ and the curve $\mu \mapsto 1-\mu^{2s}\eta_{2}^{2}$. Also as  $|\bfeta'| \to \infty$, $\mu_{\bfeta}\to 1$. Moreover, for any $\bfeta' \neq 0$, $\mu_{\bfeta} < |\bfeta'|^{-1}$. As a consequence the set of unit vector 
\[
\mathbb{V} = \{ (\mu_{\bfeta} \eta_1, \mu_{\bfeta} ^{s} \eta_2, \mu_{\bfeta}  \bfeta') : \bfeta'\in \mathbb{R}^{d}\} \subset\mathbb{S}^{d+1}
\]
avoids all points of singularity of $F$ and its derivatives. 
 Now, if $\beta$ is a multi-index with $ \beta_1 = 0 = \beta_2$, then by construction $\mu_{\bfeta}^{\beta_j} \leq |\eta_j|^{-\beta_j}$ for $j \in \{ 3, 4, \ldots, d+1, d+2 \}$. 
Plugging this $\mu_{\bfeta}$ in \eqref{weighted-dilation-derivative}, we have that for any $\bfeta' \neq 0,$
$$
\left| \p_{\beta} F(\eta_1,\eta_2,\bfeta')  \right| \leq \sup_{\bv \in \mathbb{V}} |\p_{\beta} F(\bv)| \mu^{\beta_3 + \beta_4 + \ldots + \beta_{d+2}} \leq C |\eta_3|^{-\beta_3} \cdots |\eta_{d+2}|^{-\beta_{d+2}}\,
$$
for some constant $C$ that depends on $\eta_1$, $\eta_2$, $\beta$ and $s$.  
Therefore, for any multi-index $\gamma = (\gamma_1, \ldots, \gamma_d)$, $\gamma_{i} \in \{1, 2, \cdots, d\}, $ we set $\beta = (0,0,\gamma_1, \ldots, \gamma_d)$ and obtain
\begin{equation*}
|\p_{\gamma} m_1 (\bfxi)| = \frac{1}{\ell_1} \left|\p_{\beta} F \Big(1 , \sqrt{\lambda \ell_1^{-1}},  2 \pi |\bfxi|  \Big) \right| \leq C |\xi_1|^{-\gamma_1} \cdots |\xi_d|^{-\gamma_d}\,.
\end{equation*}
Thus the hypotheses of \cite[Corollary 6.2.5]{grafakos2008classical} are satisfied for $m_1(\bfxi)$.

A similar strategy fails for $m_2(\bfxi)$ because the numerator fails to be bounded away from zero. Specifically, $m_2(\bfxi)$ is $C^d$ on $\bbR^d$ away from the coordinate axes. We must check the derivatives directly. Introduce the auxiliary function $G_a : (0,\infty) \to \bbR$ defined by
\begin{equation*}
G_a(\eta) := \frac{\eta^s}{\eta^s + a} = \frac{1}{1+ a \eta^{-s}} \,, \qquad a > 0\,.
\end{equation*}
Then it is clear that for every $n \in \bbN$ the function $G_a$ satisfies
\begin{equation*}
\left| G_a^{(n)}(\eta) \right| \leq C \eta^{-n}\,.
\end{equation*}
Let $a = \frac{\lambda}{(\ell_1 + \ell_2) 4 \pi^2}$, and let $\beta$ be a multi-index with $\beta_j \in \{ 0,1 \}$. Then
$$
\p_{\beta} m_2(\bfxi) = \frac{\ell_2}{\ell_1 + \ell_2}  G_a^{(|\beta|)} \big( |\bfxi|^2 \big)  \, \, 2^{|\beta|} \prod_{j=1}^d \xi_j^{\beta_j}
$$
and so
$$
\big| \p_{\beta} m_2(\bfxi) \big| \leq C |\bfxi|^{-2 |\beta|} \prod_{j=1}^d |\xi_j|^{\beta_j} \leq C \prod_{j=1}^d |\xi_j|^{-\beta_j}\,.
$$

Thus $m_2$ also satisfies the hypotheses of of \cite[Corollary 6.2.5]{grafakos2008classical}, and the proof is complete.
\end{proof}

\begin{remark}
It is also possible to prove that $m_2(\bfxi)$ is an $L^p$-multiplier without using the Marcinkiewicz multiplier theorem. In fact, one can show using the proof of \cite[Chapter 5, Section 3.2, Lemma 2]{stein} that $m_2(\bfxi)$ is in fact the Fourier transform of a finite measure, hence an $L^p$-multiplier.
\end{remark}

\section{The Peridynamic-Type Wave Equation}\label{sec:TimeDepProb}
In this section we turn our attention to the system of time dependent equations given in \eqref{eq:WaveEquation-MR} and prove Theorem \ref{thm:WaveEqnSolvability}. We recall that we are interested in the system 
\begin{equation}\label{eq:WaveEquation} 
\begin{cases}
\p_{tt} \bu + \bbL \bu  + \lambda \bu = \bff\,, &\qquad  (\bx,t) \text{ in } \bbR^d \times [0,T]\,, \\
\bu(\bx,0) = \bu_0(\bx)\,, &\qquad \bx \text{ in } \bbR^d\,, \\
\p_t \bu(\bx,0) = \bv_0(\bx)\,, &\qquad \bx \text{ in } \bbR^d\,,
\end{cases}
\end{equation}
where $\bff \in C^1 \left( [0,T] ; \big[ L^2(\bbR^d) \big]^d \right)$, $\bu_0 \in \big[H^{2s,2}(\bbR^d) \big]^d$, and $\bv_0 \in \big[ H^{s,2}(\bbR^d) \big]^d$.

We use the semigroup approach described in \cite[Chapter 10]{Brezis-Book}. To this end, we assume for the rest of this section that $\rho$ is in {\bf Class B} and that $m(\by)$ is an even function. In this case $\bbL$ becomes
\begin{equation}
-\bbL \bu(\bx) = \pv \intdm{\bbR^d}{ \shapetensorby \big( \bu(\bx+\by)-\bu(\bx) \big)  \rho(\by) }{\by}\,.
\end{equation}

Let $\lambda_0$ be the quantity defined in Theorem \ref{thm-GeneralKernel-L2Solvability}, and for $\lambda > \lambda_0 - 1$ define the operator
\begin{equation}
\bbL_{\lambda} := \bbL + \lambda \bbI : \big[H^{2s,2}(\bbR^d) \big]^d \to \big[ L^2(\bbR^d) \big]^d\,.
\end{equation}
Note that by the symmetry assumption on $\bbL$, 
\begin{equation*}
\begin{split}
\Vint{\bbL \bu, \bu}_{L^2(\bbR^d)} 
&= \iintdm{\bbR^d}{\bbR^d}{\rho(\bx-\by) \diffqbunorm^2 }{\by}{\bx} 
\geq 0
\end{split}
\end{equation*}
for every $\bu \in \big[H^{2s,2}(\bbR^d) \big]^d$. Therefore for any $\lambda \geq 0$ we have 
\begin{equation}\label{eq:LMonotone}
\Vint{\bbL \bu + \lambda \bu, \bu}_{L^2(\bbR^d)} \geq 0\,\quad \forall \bu \in \big[H^{2s,2}(\bbR^d) \big]^d\,.
\end{equation}
Further, Theorem \ref{thm-GeneralKernel-L2Solvability} implies that: 
\begin{equation}\label{eq:LMaximal}
\begin{split}
\text{ for every } \bg \in \big[ L^2(\bbR^d) \big]^d & \text{ there exists a unique } \bu \in \big[ H^{2s,2}(\bbR^d) \big]^d\\
 \text{ satisfying } 
	&\bbL_{\lambda} \bu + \bu = \bg \text{ in } \bbR^d\,,
\end{split}
\end{equation}
since $\lambda > \lambda_0 -1 $. Equations \eqref{eq:LMonotone} and \eqref{eq:LMaximal} define $\mathbb{L}_{\lambda}$ as a maximal monotone operator for any nonnegative $\lambda$ satisfying $\lambda > \lambda_0 - 1$.  
Since the unbounded operator $\bbL_{\lambda}$ is symmetric, by \eqref{eq:LMonotone} and \eqref{eq:LMaximal} it is self-adjoint on $\big[ H^{2s,2}(\bbR^d) \big]^d$ as well, see \cite[Proposition 7.6]{Brezis-Book}.
Therefore the positive square root of $\bbL_{\lambda}$, denoted $\bbL_{\lambda}^{1/2}$, is well-defined. Therefore, by the estimate in Theorem \ref{thm-GeneralKernel-L2Solvability} we have that for every $\bu \in \big[ H^{2s,2}(\bbR^d) \big]^d$ the norm
\begin{equation}\label{eq:SquareRootProperties}
\begin{split}
\Vint{\bbL_{\lambda}^{1/2} \bu, \bbL_{\lambda}^{1/2} \bu}_{L^2(\bbR^d)} &= \Vint{\bbL \bu, \bu}_{L^2(\bbR^d)} + \lambda \Vnorm{\bu}_{L^2}^2 
\end{split}
\end{equation}
is equivalent to the norm $\Vnorm{(-\Delta)^{s/2} \bu}_{L^2(\bbR^d)}^2 + \Vnorm{\bu}_{L^2(\bbR^d)}^2\,.$
Thus $\Vint{\bbL_{\lambda}^{1/2} \big( \cdot \big), \bbL_{\lambda}^{1/2} \big( \cdot \big)}$ defines an equivalent inner product on $\big[ H^{s,2}(\bbR^d) \big]^d$.

Now, we rewrite the system \eqref{eq:WaveEquation} as a larger system of equations
\begin{equation}\label{eq:WaveEquationSystemExpanded}
\begin{cases}
\bv = \p_t \bu\,, &\qquad (\bx,t) \text{ in } \bbR^d \times [0,T]\,, \\
\p_t \bv + \bbL \bu  + \lambda \bu = \bff\,, &\qquad  (\bx,t) \text{ in } \bbR^d \times [0,T]\,, \\
\bu(\bx,0) = \bu_0(\bx)\,, &\qquad \bx \text{ in } \bbR^d\,, \\
\p_t \bu(\bx,0) = \bv_0(\bx)\,, &\qquad \bx \text{ in } \bbR^d\,,
\end{cases}
\end{equation}
or equivalently
\begin{equation}\label{eq:WaveEquationSystem}
\begin{cases}
\p_t \bU + \frak{A}_{\lambda} \bU = \bF\,, &\qquad  (\bx,t) \text{ in } \bbR^d \times [0,T]\,, \\
\bU(\bx,0) = \bU_0(\bx)\,, &\qquad \bx \text{ in } \bbR^d\,, \\
\end{cases}
\end{equation}
where $\bU = (\bu,\bv)^{\intercal}$, $\bF = ({\bf 0},\bff)^{\intercal}$, $\bU_0 = (\bu_0, \bv_0)^{\intercal}$ and the operator $\frak{A}_{\lambda}$ is defined as
\begin{equation*}
\frak{A}_{\lambda} := 
	\begin{pmatrix}
	0 & - \bbI \\
	\bbL_{\lambda} & 0
	\end{pmatrix}\,.
\end{equation*}
We denote the Hilbert space $\cH := \big[ H^{s,2}(\bbR^d) \big]^d \times \big[ L^2(\bbR^d) \big]^d$, and we define the domain of the operator $\mathfrak{A}_{\lambda}$ by $D(\frak{A}_{\lambda}) := \big[H^{2s,2}(\bbR^d) \big]^d \times \big[ H^{s,2}(\bbR^d) \big]^d \subset \cH$. 
By \eqref{eq:SquareRootProperties}, the inner product on $\cH$ can be defined by
\begin{equation}\label{eq:InnerProduct}
\Vint{\bU, \bV}_{\cH} := \intdm{\bbR^d}{ \Vint{\bbL_{\lambda}^{1/2} \bu_1, \bbL_{\lambda}^{1/2} \bv_1} }{\bx} + \intdm{\bbR^d}{\Vint{\bu_1, \bv_1}}{\bx} + \intdm{\bbR^d}{\Vint{\bu_2, \bv_2}}{\bx}
\end{equation}
where $\bU = (\bu_1, \bu_2)$, $\bV = (\bv_1, \bv_2)$. 

Denoting the identity operator on $\cH$ by $\frak{I}$, we note that the operator  $\frak{A}_{\lambda} + \frak{I}$ is nonnegative. Indeed, for any $\bU = (\bu,\bv)^{\intercal} \in D(\frak{A}_{\lambda})$ we have using \eqref{eq:InnerProduct} that
\begin{equation}\label{eq:AMonotone}
\begin{split}
&\Vint{(\frak{A}_{\lambda} + \frak{I} )\bU, \bU}_{\cH} \\
& = \Vint{\frak{A}_{\lambda} \bU, \bU}_{\cH} + \Vint{\bU, \bU}_{\cH} \\
	&= - \intdm{\bbR^d}{ \Vint{\bbL_{\lambda}^{1/2} \bu, \bbL_{\lambda}^{1/2} \bv} }{\bx} - \intdm{\bbR^d}{\Vint{\bu, \bv}}{\bx} + \intdm{\bbR^d}{\Vint{\bbL_{\lambda} \bu, \bv}}{\bx} \\
	&\quad + \intdm{\bbR^d}{ |\bbL_{\lambda}^{1/2} \bu|^2 }{\bx} + \intdm{\bbR^d}{|\bu|^2}{\bx} + \intdm{\bbR^d}{|\bv|^2}{\bx} \\
	&= - \intdm{\bbR^d}{ \Vint{\bbL_{\lambda} \bu,  \bv} }{\bx} + \intdm{\bbR^d}{\Vint{\bbL_{\lambda} \bu, \bv}}{\bx} + \intdm{\bbR^d}{ |\bbL_{\lambda}^{1/2} \bu|^2 }{\bx} \\
	&\quad  + \intdm{\bbR^d}{|\bu|^2}{\bx} - \intdm{\bbR^d}{\Vint{\bu, \bv}}{\bx} + \intdm{\bbR^d}{|\bv|^2}{\bx} \\
	&= \intdm{\bbR^d}{ |\bbL_{\lambda}^{1/2} \bu|^2 }{\bx} + \frac{1}{2} \intdm{\bbR^d}{(|\bu|^2 + |\bv|^2)}{\bx} + \frac{1}{2} \intdm{\bbR^d}{|\bu - \bv|^2}{\bx} \geq 0\,.
\end{split}
\end{equation}
In addition, the operator $\frak{A}_{\lambda} + 2 \frak{I}$ is invertible. To see this, note that  inverting $\frak{A}_{\lambda}$ is equivalent to solving the system
\begin{equation}\label{eq:AMaximalSystem}
\begin{cases}
- \bv + 2 \bu = \bg_1\,, \quad
\bbL \bu + \lambda \bu + 2 \bv = \bg_2\,, &\quad \bx \in \bbR^d\,,
\end{cases}
\end{equation}
for any $(\bg_1, \bg_2)^{\intercal} \in \cH$, which is equivalent to solving
\begin{equation}\label{eq:AMaximal}
\bbL \bu + (\lambda + 4) \bu = 2 \bg_1 + \bg_2\,, \quad \bx \in \bbR^d\,.
\end{equation}
Since $\lambda + 4 > \lambda_0$ and since $2 \bg_1 + \bg_2 \in \big[ L^2(\bbR^d) \big]^d$, \eqref{eq:AMaximal} has a unique solution $\bu \in \big[ H^{2s,2}(\bbR^d) \big]^d$. Then $\bv = 2 \bu - \bg_1 \in \big[ H^{s,2}(\bbR^d) \big]^d$ and so $\bU = (\bu, \bv)^{\intercal} \in D(\frak{A}_{\lambda})$ solves \eqref{eq:AMaximalSystem}.

Therefore, since $\frak{A}_{\lambda} + \frak{I}$ is \textit{maximal monotone} for $\lambda > \lambda_0 - 1$ nonnegative, by the Hille-Yosida theorem \cite[Theorem 1, Chapter XVII, section 3]{Dautray-Lions}, there is a continuous contraction semigroup $S_{{\frak{A}_{\lambda}}+ \mathfrak{I}}(t):L^{2}\to L^2$ such that for any $\bG \in [L^{2}(\mathbb{R}^{d})]^{2d}$ the solution  $\bU$ of 
\begin{equation}\label{eq:WaveEquationSystemPlusId}
\begin{cases}
\p_t \bU + \frak{A}_{\lambda} \bU + \mathfrak{I} \bu = \bG\,, &\qquad  (\bx,t) \text{ in } \bbR^d \times [0,T]\,, \\
\bU(\bx,0) = \bU_0(\bx)\,, &\qquad \bx \text{ in } \bbR^d\,, \\
\end{cases}
\end{equation}
can be given by the formula \cite[Theorem 7.10]{Brezis-Book}
\begin{equation*}
\bU(\bx,t) = S_{{\frak{A}_{\lambda}}+\mathfrak{I}}(t) \bU_0(\bx) + \intdmt{0}{t}{S_{{\frak{A}_{\lambda}}+\mathfrak{I}}(t-s) \bG(\bx,s)}{s}\,.
\end{equation*}
Taking  $\bG = \rme^{t}\bF$ and making the substitution 
$\bW = \rme^t \bU$, we see that $\bW$ is the unique solution to \eqref{eq:WaveEquationSystem} with source data $\bF$. Thus the first component of $\bW$ is the unique solution to \eqref{eq:WaveEquation}.

The conservation law 
\eqref{eq:ConservationLaw-MR} follows from multiplying the equation by $\p_t \bu$ and integrating by parts.

%
%
%

%

\bibliography{References}
\bibliographystyle{plain}

\end{document}